\def\subjclass#1{{\renewcommand{\thefootnote}{}%
\footnote{\emph{Mathematics Subject Classification (2020):} #1}}}
\def\keywords#1{{\renewcommand{\thefootnote}{}%
\footnote{\emph{Keywords:} #1}}}
\def\ackn#1{{\renewcommand{\thefootnote}{}%
\footnote{#1}}}
\newtheorem{theorem}{Theorem}[section]
\newtheorem{lemma}[theorem]{Lemma}
\newtheorem{proposition}[theorem]{Proposition}
\newtheorem{fact}[theorem]{Fact}
\newtheorem{claim}[theorem]{Claim}
\newcommand\norm[1]{\left\|#1\right\|}
\newcommand\zero{\mathbf{0}}
\newcommand{\fra}{Fra\"iss\'e}
\newcommand\Aut{\mbox{Aut}}
\newcommand\Iso{\mbox{Iso}}
\newcommand\LIso{\mbox{LIso}}
\DeclareMathOperator{\spn}{span}
\newcommand\FFb{\mathbb{F}}
\newcommand\NNb{\mathbb{N}}
\newcommand\QQb{\mathbb{Q}}
\newcommand\RRb{\mathbb{R}}
\newcommand\KKc{\mathcal{K}}
\newcommand\LLc{\mathcal{L}}
\newcommand\PPc{\mathcal{P}}
\newcommand\UUc{\mathcal{U}}
\newcommand\GGf{\mathbf{G}}
\newcommand\HHf{\mathbf{H}}
\newcommand\LLf{\mathbf{L}}
\newcommand\UUf{\mathbf{U}}
\newcommand{\opnorm}{\@ifstar\@opnorms\@opnorm}
\newcommand{\@opnorms}[1]{%
	\left|\mkern-1.5mu\left|\mkern-1.5mu\left|
	#1
	\right|\mkern-1.5mu\right|\mkern-1.5mu\right|
}
\newcommand{\@opnorm}[2][]{%
	\mathopen{#1|\mkern-1.5mu#1|\mkern-1.5mu#1|}
	#2
	\mathclose{#1|\mkern-1.5mu#1|\mkern-1.5mu#1|}
}
\author{Ond\v{r}ej Kurka \and Maciej Malicki}
\date{}
\begin{document}
\title{The rational Gurarii space and its linear isometry group}

\maketitle

\begin{abstract}
We show that the classes of partial isometries in finite-dimensional polyhedral spaces and in finite-dimensional rational polyhedral spaces do not have the weak amalgamation property. This implies that the linear isometry group of the rational Gurarii space does not have a comeager conjugacy class. Our methods demonstrate also that the classes of finite-dimensional polyhedral spaces and of finite-dimensional rational polyhedral spaces fail to have the Hrushovski property.
\end{abstract}

\subjclass{54H11, 46B20, 03E15}

\keywords{rational Gurarii space, linear isometry group, conjugacy classes, weak amalgamation property}

\ackn{The first named author was supported by the Czech Science Foundation, project no.~GA\v{C}R~22-07833K, and by the Academy of Sciences of the Czech Republic (RVO 67985840). The second author was partially supported by the National Science Centre, Poland under the Weave-UNISONO call in the Weave programme [grant no 2021/03/Y/ST1/00072].}

\section{Introduction}
In this paper, we investigate amalgamation of partial isometries in finite-dimensional polyhedral spaces. We prove that the classes $\PPc_\QQb$ and $\PPc_\RRb$ of finite-dimensional polyhedral spaces over $\QQb$ and over $\RRb$, respectively, do not satisfy the Hrushovski property, and that the classes $(\PPc_\QQb)_1$ and $(\PPc_\RRb)_1$ of partial isometries in the respective spaces do not have the weak amalgamation property. In particular, this implies that the linear isometry group $\Aut(\GGf_\QQb)$ of the \fra \ limit $\GGf_\QQb$ of $\PPc_\QQb$, called the rational Gurarii space, does not admit a comeager conjugacy class.    

Our investigation is motivated by a question posed by Sabok \cite{Sa}, namely whether the linear isometry group $\LIso(\GGf)$ of the Gurarii space $\GGf$ has the automatic continuity property. Recall that a Polish (i.e., separable and completely metrizable) group has the automatic continuity property if all its homomorphisms into separable topological groups are continuous. This notion was first studied in the setting of $C^*$-algebras and Banach algebras. For example, Sakai \cite{Sak}, resolving a conjecture of Kaplansky, showed that all derivations on a $C^*$-algebra are norm-continuous. More recently, Tsankov \cite{Ts} established the automatic continuity for the unitary group $U(\HHf)$ of the separable Hilbert space $\HHf$, while Sabok \cite{Sa} obtained the same for the isometry group $\Iso(\UUf)$ of the Urysohn space $\UUf$.

He developed a general framework (see also \cite{Ma1} for a similar but simpler approach), where properties of automorphism groups of complete metric structures are studied via certain countable substructures, and their automorphism groups. His proof ultimately rests on a result of Solecki \cite{So}, who showed that the class $\UUc_0$ of finite metric spaces with rational distances has the Hrushovski property. Consequently, the isometry group $\Aut(\UUf_0)$ of the rational Urysohn space $\UUf_0$, the \fra \ limit of $\UUc_0$, has the automatic continuity property. 

As a matter of fact, an intermediate property, called ample generics, plays a key role there. Recall that a Polish group $G$ has ample generics if it has a comeager diagonal $n$-conjugacy class for every $n \in \NNb$, i.e., there exists a comeager orbit under the action of $G$ on $G^n$ by diagonal conjugation. It was proved by Kechris and Rosendal \cite{KeRo} that a group with ample generics has the automatic continuity property. Let us mention that Tsankov's work on $U(\HHf)$ also requires that certain countable counterpart of the separable Hilbert space, studied by Rosendal \cite{Ro}, has ample generics. 

It is known that $\LIso(\GGf)$ does not have ample generics. In fact, it does not even admit a comeager conjugacy class -- indeed, \cite{DoMa} shows that no countably infinite group has a generic representation in $\LIso(\GGf)$. Nevertheless, the question of whether this group has the automatic continuity property remains open. Turning to the rational Gurarii space $\GGf_\QQb$, essentially nothing has been established in this direction so far. In this paper we prove that $\Aut(\GGf_\QQb)$ has no comeager conjugacy class, which indicates that Sabok's method may not extend to the Gurarii space. Still, it remains conceivable that a different notion of a rational Gurarii space could be better suited for such an approach.

\section{\fra \ classes}

In model theory, a structure $M$ is a set with relations, functions, and constants. The symbols denoting these objects, together with their arities, are referred to as the signature of $M$. 

Let $\KKc$ be a class of structures in a fixed signature that is closed under isomorphism. Following \cite{KeRo}, we say that $\KKc$ is \emph{hereditary} if it is closed under taking substructures; it has \emph{joint embedding property} (JEP) if any two $A,B \in \KKc$ can be embedded into a single $C \in \KKc$; it has \emph{amalgamation property} (AP) if for every $A, B,C \in \KKc$ and embeddings $\alpha\colon A\to B$ and $\beta\colon A\to C$ there is $D\in\KKc$ and embeddings $\gamma\colon B\to D$, $\delta\colon C\to D$ such that $\gamma\circ\alpha=\delta\circ\beta$. 
Finally, $\KKc$ has \emph{weak amalgamation property} (WAP) if for every $A\in\KKc$ there is $A'\in\KKc$ and an embedding $\phi\colon A\to A'$ such that for every $B,C\in\KKc$ and embeddings $\alpha\colon A'\to B$, $\beta\colon A'\to C$ there is $D\in\KKc$ and embeddings $\gamma\colon B\to D$, $\delta\colon C\to D$ such that $\gamma\circ\alpha\circ\phi=\delta\circ\beta\circ\phi$. Note that since $\KKc$ is closed under isomorphism, we can assume that in the definitions above, only extensions $\phi$, $\alpha$, $\beta$, instead of all embeddings, are considered.  

Let $\KKc$ be a class of finitely generated structures in a fixed signature that is closed under isomorphism, and countable up to isomorphism. We say that $\KKc$ is a \emph{\fra \ class} if it is hereditary, has JEP and AP. By the classical theorem due to \fra \ (see \cite{Fr}), if $\KKc$ is a \fra\ class, then there exists a unique up to isomorphism countable structure $M$ such that  $\KKc$ is the class of finitely generated structures embeddable in $M$, and $M$ has the extension property, i.e., for any $A,B \in \KKc$ with $A \subseteq B$, and embedding $\alpha: A \to M$, there is an embedding $\beta: B \to M$ that extends $\alpha$. We call this $M$ the \emph{\fra \ limit} of $\KKc$.

A class $\KKc$ of structures has the \emph{$n$-Hrushovski property}, if for any $A \in \KKc$, and $n$-tuple $f_1, \ldots, f_n$ of partial automorphisms of $A$, there exists $B \in \KKc$ such that $A \subseteq B$, and every $f_i$ can be extended to an automorphism of $B$. It has the \emph{Hrushovski property} if it has the $n$-Hrushovski property for every $n \in \NNb$.

For a class $\KKc$ of structures, let $\KKc_1$ be the class of all partial automorphisms of elements from $\KKc$, i.e., pairs $(A,f:B \to C)$ such that $A,B,C \in \KKc$, $B,C \subseteq A$, and $f$ is an isomorphism between $B$ and $C$. Then $\alpha:A \to A'$ is an embedding of $(A,f:B \to C)$ into $(A',f':B' \to C')$ if it embeds $A$ into $A'$ and $\alpha \circ f \subseteq f' \circ \alpha$. In the same way, we define classes $\KKc_n$ of elements of the form $(A,f_1:B_1 \to C_1, \ldots, f_n:B_n \to C_n )$, where $A \in \KKc$ and $f_i$ are partial automorphisms of $A$. JEP, AP and WAP can be defined for $\KKc_n$ analogously to $\KKc$.

Recall that a topological group is \emph{Polish} if it is separable, and completely metrizable. A Polish group $G$ has a dense (non-meager, etc.) diagonal $n$-conjugacy class if there is a dense (non-meager, etc.) orbit in the action of $G$ on $G^n$ defined by $g.(g_1, \ldots, g_n)=(gg_1g^{-1}, \ldots, gg_ng^{-1})$, $g,g_1, \ldots, g_n \in G$. It has \emph{ample generics} if it has a comeager diagonal $n$-conjugacy class for every $n \in \NNb$.

It is well known that the automorphism group $\Aut(M)$ of a countable structure $M$, with the topology inherited from $M^M$, where $M$ is regarded as a discrete space, is a Polish group. The following characterization was proved for \fra \ limits of classes $\KKc$ \ of finite structures in \cite{KeRo}, and generalized to classes $\KKc$ of finitely generated structures (and any countable structures $M$) in \cite{Ma2}: 

\begin{theorem}
\label{th:kechros}
Let $M$ be the \fra \ limit of a \fra \ class $\KKc$, and let $n \geq 1$. The group $\Aut(M)$ has a comeager diagonal $n$-conjugacy class iff $\KKc_n$ has JEP and WAP.
\end{theorem}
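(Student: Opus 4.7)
The plan is the standard dictionary between combinatorial properties of $\KKc_n$ and dynamical properties of the diagonal conjugacy action of $G = \Aut(M)$ on $G^n$. Each $\bar p = (A, f_1, \ldots, f_n) \in \KKc_n$ realized with $A \subseteq M$ determines a non-empty basic open set $V_{\bar p} \subseteq G^n$, namely $\{(g_1, \ldots, g_n) : f_i \subseteq g_i \text{ for all } i\}$; non-emptiness follows from the extension property of $M$, as each $f_i$ extends to an automorphism by a back-and-forth argument. These sets form a basis of $G^n$, diagonal conjugation by $h \in G$ sends $V_{\bar p}$ to $V_{h\bar p h^{-1}}$, and two elements of $\KKc_n$ admit a common embedding into some $\bar r \in \KKc_n$ precisely when suitable $G$-translates of their basic open sets intersect.

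For the forward direction, assume the diagonal action has a comeager orbit $C$. For JEP, choose $\bar g \in V_{\bar p} \cap C$ and $\bar g' = h \bar g h^{-1} \in V_{\bar q} \cap C$; both intersections are non-empty since $C$ is comeager and the $V$'s are non-empty open. Then $\bar g$ simultaneously extends $\bar p$ and $h^{-1} \bar q h$, so restricting $\bar g$ to a sufficiently large finitely generated substructure of $M$ containing $A$ and $h^{-1}$ of the ambient structure of $\bar q$ yields the required common extension. For WAP, fix $\bar p$, choose $\bar g \in V_{\bar p} \cap C$, and take $\bar p'$ to be the restriction of $\bar g$ to a sufficiently large finitely generated $D \supseteq A$. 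For any $\bar q, \bar r \in \KKc_n$ extending $\bar p'$, the extension property together with comeagerness of $C$ produces lifts $\bar g_q, \bar g_r \in C$ inside $V_{\bar q}, V_{\bar r}$; these lifts are $G$-conjugate and both extend $\bar p'$, so the conjugating element furnishes an amalgam of $\bar q$ and $\bar r$ inside $M$ which identifies the two copies of $\bar p$.

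For the backward direction, enumerate $\KKc_n$ up to isomorphism as $\bar p_1, \bar p_2, \ldots$ and set $W_i = \bigcup\{V_{\bar q} : \bar q \cong \bar p_i\}$; JEP ensures each $W_i$ is open and dense, so $C_1 = \bigcap_i W_i$ is a comeager $G_\delta$ whose elements realize every member of $\KKc_n$ as a finite restriction. A further countable refinement using WAP-witnesses produces a comeager $C \subseteq C_1$ in which the realizations are coherent. The claim is that $C$ is a single orbit: given $\bar g, \bar g' \in C$, we construct $h \in G$ with $h \bar g h^{-1} = \bar g'$ by a back-and-forth enumeration of $M$, at each step extending the current finite partial isomorphism between $(M, \bar g)$ and $(M, \bar g')$ and invoking WAP to absorb the next element of $M$ into both the domain and the range while preserving the matching of $\bar g$ and $\bar g'$.

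The main obstacle is the WAP portion of the forward direction. The subtlety is that $G$-conjugacy $\bar g_r = k \bar g_q k^{-1}$ supplies only some $k \in G$, whereas the WAP amalgam over $\bar p$ requires the two copies of $\bar p$ in the amalgam to agree -- equivalently, $k$ must coincide with the identity on $A$. This is arranged by choosing $\bar p'$ large enough inside $\bar g$ that the conjugating element can be taken to fix $A$ pointwise; the ``weak'' in WAP is precisely what gives us the freedom to enlarge $\bar p'$ as much as needed, since agreement is demanded only over $\bar p$, not over $\bar p'$.
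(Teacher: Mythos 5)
The paper does not actually prove Theorem~\ref{th:kechros}: it is quoted from Kechris--Rosendal \cite{KeRo} (for classes of finite structures) and from \cite{Ma2} (for finitely generated structures), so there is no in-paper argument to compare against. Your proposal follows the standard route of those references -- the dictionary between basic open sets $V_{\bar p}\subseteq G^n$ and elements of $\KKc_n$, JEP $\leftrightarrow$ topological transitivity, WAP $\leftrightarrow$ local amalgamability of open sets, and a back-and-forth for the converse -- and the outline is essentially right, including the observation that the converse needs a cofinal system of WAP-witnesses along the generic tuple, not merely the realization of every type.

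There is, however, a genuine gap at the crux you yourself identify, namely the forward WAP direction. You correctly note that the conjugator $k$ with $\bar g_r=k\bar g_q k^{-1}$ must fix $A$ pointwise, but your proposed fix -- ``choose $\bar p'$ large enough inside $\bar g$'' -- is an assertion rather than an argument: enlarging $\bar p'$ does not by itself move the conjugating element into the pointwise stabilizer $G_A$. The actual mechanism is a Baire-category argument on the $G_A$-orbit: $G_A$ is an open subgroup, so $G$ is covered by countably many left cosets $g_iG_A$, whence the comeager orbit $C=G\cdot\bar g=\bigcup_i g_i(G_A\cdot\bar g)$ forces $G_A\cdot\bar g$ to be non-meager; being analytic it has the Baire property and is therefore comeager in some nonempty open set, which (after replacing $\bar g$ by a $G_A$-conjugate, and using that $G_A\cdot\bar g\subseteq V_{\bar p}$) may be taken to be a basic set $V_{\bar p'}$ with $\bar p'\geq\bar p$. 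Only with this choice of $\bar p'$ does ``$C$ meets $V_{\bar q^*}$ and $V_{\bar r^*}$'' upgrade to ``$G_A\cdot\bar g$ meets both,'' which is what produces a conjugator $k=h_2h_1^{-1}\in G_A$ and hence an amalgam agreeing over $\bar p$. This Pettis-style step (equivalently, the neighborhood-of-identity clause in the topological characterization of comeager orbits in \cite{KeRo}) is the missing idea; without it the ``weakness'' of WAP alone does not rescue the argument. The remaining portions of your sketch are correct modulo routine details about restricting automorphism tuples to finitely generated substructures, which is the point where \cite{Ma2} extends the original finite-structure argument to the setting needed here.
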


\section{Polyhedral spaces}

We now recall the notion of a polyhedral space, considered here both in the rational and real settings. First, let us formulate the following fact that is well known (at least in the real setting).

In what follows, in a vector space over $ \mathbb{Q} $, the notions of a convex hull, an absolutely convex hull and an extreme point are defined in the same way as in a real vector space, with the difference that rational numbers are used instead of real numbers.

\begin{fact} \label{fact1}
Let $ X $ be a finite-dimensional normed space over $ \mathbb{F} = \mathbb{Q} $ or $ \mathbb{F} = \mathbb{R} $. Then the following assertions are equivalent:

(i) The closed unit ball $ B_{X} $ of $ X $ is the absolutely convex hull of a finite number of points in $ X $.

(ii) There are functionals $ u^{*}_{1}, \dots, u^{*}_{n} $ in the dual space $ X^{*} = \mathcal{L}(X, \mathbb{F}) $ such that the norm of $ X $ can be expressed by
$$ \Vert x \Vert = \max \{ |u^{*}_{i}(x)| : 1 \leq i \leq n \}. \leqno (*) $$
\end{fact}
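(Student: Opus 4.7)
The plan is to prove the equivalence (i)$\Leftrightarrow$(ii) via the standard polar duality in finite-dimensional $\FFb$-vector spaces, which interchanges the vertex description of a centrally symmetric convex body with its half-space (facet) description. The key input is the finite-dimensional Minkowski--Weyl theorem: a bounded intersection of finitely many half-spaces in $\FFb^{n}$ is the convex hull of its finitely many extreme points, and conversely. In the rational case, each such extreme point is automatically rational, since it is the unique solution of a square rational linear system.

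For (ii)$\Rightarrow$(i), formula $(*)$ presents the unit ball as $B_{X}=\bigcap_{i=1}^{n}\{x\in X:|u^{*}_{i}(x)|\le 1\}$, a bounded polytope. By Minkowski--Weyl, $B_{X}$ is the convex hull of its finitely many extreme points; the central symmetry $B_{X}=-B_{X}$ then lets us rewrite this as the absolutely convex hull of a finite subset of $X$.

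For (i)$\Rightarrow$(ii), I would pass to the polar in the dual space $X^{*}=\mathcal{L}(X,\FFb)$. Writing $B_{X}$ as the absolutely convex hull of $x_{1},\dots,x_{m}\in X$, one checks $B_{X}^{\circ}:=\{x^{*}\in X^{*}:|x^{*}(x)|\le 1 \text{ for all } x\in B_{X}\}=\bigcap_{i=1}^{m}\{x^{*}:|x^{*}(x_{i})|\le 1\}$, which is a bounded polytope in $X^{*}$ (boundedness because the $x_{i}$ span $X$, as $B_{X}$ has nonempty interior). Minkowski--Weyl applied to $B_{X}^{\circ}$ furnishes finitely many extreme points $u^{*}_{1},\dots,u^{*}_{n}$, and the bipolar theorem yields $B_{X}=B_{X}^{\circ\circ}=\bigcap_{j=1}^{n}\{x\in X:|u^{*}_{j}(x)|\le 1\}$; computing the Minkowski gauge of this polytope then gives $\norm{x}=\max_{j}|u^{*}_{j}(x)|$, which is exactly $(*)$.

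The only delicate point, and the expected main obstacle, is to justify these convex-analytic inputs in the rational case $\FFb=\QQb$, where one cannot appeal directly to separation theorems based on continuity. The remedy is to view $X$ as a rational subspace of its real scalar extension $X\otimes_{\QQb}\RRb$ (equivalently, of $\RRb^{n}$ after choosing a rational basis): the real Minkowski--Weyl and bipolar theorems apply there, and the vertices and defining functionals they produce are always rational, being unique solutions of square rational linear systems, so they descend back to $X$ and $X^{*}$ and give the conclusion over $\QQb$.
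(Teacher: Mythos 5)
Your overall strategy coincides with the paper's: both reduce the rational case to the known real one by scalar extension to $\mathbb{R}^{d}$ and then observe that the relevant vertices and functionals are rational because they are unique solutions of square rational linear systems. However, your proposed remedy for the rational case covers only half of what has to be checked, and the omitted half is where most of the actual work in the paper's proof lies. The issue is this: knowing that the extreme points $a_{1},\dots,a_{m}$ of the real body $B$ are rational does not by itself show that the \emph{rational} absolutely convex hull of $a_{1},\dots,a_{m}$ (i.e., using rational coefficients, which is what assertion (i) means over $\mathbb{Q}$) equals $B\cap\mathbb{Q}^{d}$. A rational point of $B$ is a priori only a \emph{real} convex combination of the $\pm a_{k}$, and one must show it admits a representation with rational coefficients. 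This step is needed in both directions: in (i)$\Rightarrow$(ii) to identify $B_{X}$ with $B\cap\mathbb{Q}^{d}$ so that the real formula $(*)$ restricts correctly to $X$, and in (ii)$\Rightarrow$(i) to conclude that the rational unit ball really is the rational hull of the $a_{k}$. The paper handles it via Carath\'eodory: each point lies in a simplex spanned by at most $d+1$ of the $\pm a_{k}$, where the barycentric coordinates are unique and hence rational. (Alternatively, one can note that the set of admissible coefficient vectors is a nonempty rational polyhedron and therefore contains a rational point.) Your sketch, as written, silently passes from ``the vertices are rational'' to ``the conclusion descends to $\mathbb{Q}$,'' and that inference is exactly the gap.

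A second, smaller omission occurs in (ii)$\Rightarrow$(i): you call $\bigcap_{i}\{x:|u^{*}_{i}(x)|\le 1\}$ a \emph{bounded} polytope, but after extending the $u^{*}_{i}$ to $\mathbb{R}^{d}$ this requires an argument. The formula $(*)$ could in principle degenerate to a pseudonorm on $\mathbb{R}^{d}$ even though it is a genuine norm on $\mathbb{Q}^{d}$; the paper rules this out by observing that the rational matrix $(u^{*}_{j}(e_{i}))$ has the same rank over $\mathbb{Q}$ as over $\mathbb{R}$. (Your boundedness argument for the polar in the direction (i)$\Rightarrow$(ii) is fine, since the $x_{i}$ span $X$ over $\mathbb{Q}$ and hence their real span is all of $\mathbb{R}^{d}$.) Both gaps are repairable, and once repaired your polar-duality presentation is a legitimate, essentially equivalent route to the paper's; but as it stands the proposal does not yet establish the rational case.
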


\begin{proof}
We consider this fact in the setting $ \mathbb{F} = \mathbb{R} $ to be well known and understood, so we concentrate only on the setting $ \mathbb{F} = \mathbb{Q} $. We can assume that $ X = \mathbb{Q}^{d} $ for some $ d $. By $ e_{1}, \dots, e_{d} $ we denote the canonical basis of $ \mathbb{Q}^{d} $.

(i) $ \Rightarrow $ (ii): Let the unit ball of $ X = \mathbb{Q}^{d} $ be the absolutely convex hull of points $ a_{1}, a_{2}, \dots, a_{m} $ in $ \mathbb{Q}^{d} $. Let $ B $ denote the absolutely convex hull in $ \mathbb{R}^{d} $ of the same set of points. Since $ B $ contains $ \varepsilon e_{i} $ for a sufficiently small $ \varepsilon > 0 $ (we can take $ 1/\Vert e_{i} \Vert_{X} $) for $ 1 \leq i \leq d $, the point $ 0 \in \mathbb{R}^{d} $ belongs to the interior of $ B $. Hence $ B $ is the unit ball of a norm $ \opnorm{\cdot} $ on $ \mathbb{R}^{d} $. Let us realize that $ \opnorm{x} = \Vert x \Vert_{X} $ for every $ x \in \mathbb{Q}^{d} $, or equivalently $ B_{X} = B \cap \mathbb{Q}^{d} $. We need to check that if a point $ x \in \mathbb{Q}^{d} $ can be expressed as a convex combination of $ \pm a_{1}, \dots, \pm a_{m} $, then it can be expressed as a convex combination of $ \pm a_{1}, \dots, \pm a_{m} $ with rational coefficients. Note that $ x $ is a convex combination of at most $ d+1 $ points $ \pm a_{k} $ that form vertices of a simplex. Then the coefficients are uniquely determined, and since $ x $ and all $ a_{k} $'s belong to $ \mathbb{Q}^{d} $, the coefficients must be rational.

We know from the real case that there are functionals $ u^{*}_{1}, \dots, u^{*}_{n} $ such that the norm of $ X $ can be expressed by $ (*) $. However, we need to show that these functionals can be chosen to be $ \mathbb{Q} $-valued on $ \mathbb{Q}^{d} $, or equivalently $ u^{*}_{j}(e_{i}) \in \mathbb{Q} $ for $ 1 \leq j \leq n, 1 \leq i \leq d $. For this purpose, let $ u^{*}_{1}, \dots, u^{*}_{n} $ be the extreme points of the dual unit ball of the space $ (\mathbb{R}^{d}, \opnorm{\cdot}) $. Equivalently, $ u^{*}_{1}, \dots, u^{*}_{n} $ are those functionals that are equal to $ 1 $ on some $ (d-1) $-dimensional polytope contained in the unit sphere. Thus, each $ u^{*}_{j} $ is equal to $ 1 $ for $ d $ linearly independent points $ \pm a_{k} $. These $ d $ points have rational coordinates and uniquely determine the numbers $ u^{*}_{j}(e_{i}) $, so these numbers must be rational.

(ii) $ \Rightarrow $ (i): Let $ u^{*}_{1}, \dots, u^{*}_{n} $ in the dual space $ X^{*} = \mathcal{L}(\mathbb{Q}^{d}, \mathbb{Q}) $ be such that the norm of $ X $ can be expressed by $ (*) $. Let us extend each $ u^{*}_{j} $ to $ \mathbb{R}^{d} $ in the obvious way and equip $ \mathbb{R}^{d} $ with $ \opnorm{\cdot} $ given by the same formula $ (*) $. This is clearly a pseudonorm, and we check first that it is a norm. Given $ x = (x_{1}, \dots, x_{d}) \in \mathbb{R}^{d} $ different from $ 0 $, we need to verify that $ \opnorm{x} > 0 $. Assume the opposite, i.e., $ \opnorm{x} = 0 $, and so $ u^{*}_{j}(x) = 0 $ for $ 1 \leq j \leq n $. Let $ U $ denote the matrix
$$ U = (u^{*}_{j}(e_{i}))_{1 \leq j \leq n, 1 \leq i \leq d}. $$
Considering $ x $ as a one column matrix, we obtain $ Ux = 0 $, and so $ U $ has rank less than $ d $ when considered as a matrix over $ \mathbb{R} $. At the same time, $ U $ is a matrix consisting of rational numbers, and it has the same rank if considered over $ \mathbb{Q} $. This is because the rank of $ U $ is equal to the size of the biggest square submatrix with non-zero determinant. Hence, there is $ \widetilde{x} \in \mathbb{Q}^{d} $ different from $ 0 $ such that $ U\widetilde{x} = 0 $. But then $ \Vert \widetilde{x} \Vert_{X} = 0 $, which is not possible. Thus $ \opnorm{\cdot} $ is a norm indeed.

Let $ a_{1}, \dots, a_{m} $ be the extreme points of the unit ball of $ (\mathbb{R}^{d}, \opnorm{\cdot}) $ (we know from the real case that there are only finitely many of them). Each $ a_{k} $ belongs to $ \mathbb{Q}^{d} $, since $ u^{*}_{j}(a_{k}) = \pm 1 $ for $ d $ linearly independent functionals $ u^{*}_{j} $, while these functionals are $ \mathbb{Q} $-valued on $ \mathbb{Q}^{d} $ and together with the values $ \pm 1 $ uniquely determine $ a_{k} $.

To show that the points $ a_{1}, \dots, a_{m} $ witness that (i) holds, we need to check that if a point $ x \in \mathbb{Q}^{d} $ can be expressed as a convex combination of $ a_{1}, \dots, a_{m} $, then it can be expressed as a convex combination of $ a_{1}, \dots, a_{m} $ with rational coefficients. This can be done in the same way as in the proof of the implication (i) $ \Rightarrow $ (ii).
\end{proof}

The spaces that satisfy the equivalent conditions (i) and (ii) above are called \emph{polyhedral}. For a polyhedral space $ X $, the extreme points of the unit ball $ B_{X} $ are called \emph{vertices} of $ B_{X} $. In other words, the set of vertices of $ B_{X} $ is the smallest set whose convex hull is $ B_{X} $.

Let us note that if the dual space $ X^{*} $ is polyhedral and $ u^{*}_{1}, \dots, u^{*}_{n} $ is the enumeration of vertices of $ B_{X^{*}} $, then $ (*) $ holds for each $ x \in X $.

\begin{fact}
(a) If $ X $ is a polyhedral space, then its dual space $ X^{*} = \mathcal{L}(X, \mathbb{F}) $ is polyhedral.

(b) If $ Y $ is a subspace of a polyhedral space $ X $, then both $ Y $ and the quotient $ X/Y $ are polyhedral.

(c) If $ X $ and $ Y $ are polyhedral spaces, then their $ \ell_{\infty} $-sum and $ \ell_{1} $-sum, that is, the space $ X \oplus Y $ with the norms $ \Vert (x, y) \Vert_{\infty} = \max \{ \Vert x \Vert, \Vert y \Vert \} $ and $ \Vert (x, y) \Vert_{1} = \Vert x \Vert + \Vert y \Vert $, are polyhedral spaces.

(d) If $ f : X \to X $ is a surjective linear isometry on a polyhedral space $ X $, then there is $ n \in \NNb $ such that $ f^{n} $ is the identity on $ X $.
\end{fact}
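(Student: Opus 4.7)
The plan is to handle the four parts largely by direct translations of the characterizations (i) and (ii) from Fact~\ref{fact1}, using part (a) as a duality tool in a few places. Throughout, I work in finite dimensions, so $ X^{**} = X $ canonically and norm-preservingly, and $ (X/Y)^{*} $ identifies isometrically with $ Y^{\perp} \subseteq X^{*} $; these identifications go through for $ \FFb = \QQb $ by the same linear-algebra arguments used in the proof of Fact~\ref{fact1} (rationality of coefficients follows from the uniqueness of solutions of rational systems).

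For part (a), I would use condition (i) for $ X $: write $ B_{X} = \mathrm{abs\,conv}(a_{1}, \dots, a_{m}) $. For $ x^{*} \in X^{*} $, the norm is attained on extreme points, hence $ \Vert x^{*} \Vert = \max_{i} |x^{*}(a_{i})| $. Viewing each $ a_{i} $ as a functional $ \widehat{a}_{i} $ on $ X^{*} $ via the canonical evaluation, this is exactly condition (ii) for $ X^{*} $, with $ \widehat{a}_{1}, \dots, \widehat{a}_{m} $ as the witnessing functionals. For part (b) on the subspace $ Y \subseteq X $, I would use condition (ii) for $ X $: the restrictions of the witnessing functionals $ u^{*}_{1}, \dots, u^{*}_{n} $ to $ Y $ still satisfy $ (*) $ on $ Y $. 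For the quotient $ X/Y $, the cleanest route is via duality: by (a), $ X^{*} $ is polyhedral; by the subspace case just proved, $ Y^{\perp} \subseteq X^{*} $ is polyhedral; applying (a) once more to $ Y^{\perp} $ yields that $ (Y^{\perp})^{*} \cong (X/Y)^{**} \cong X/Y $ is polyhedral. (Alternatively, a direct argument shows that $ \mathrm{abs\,conv}(\pi(a_{1}), \dots, \pi(a_{m})) = B_{X/Y} $, where $ \pi \colon X \to X/Y $ is the quotient map, but duality avoids having to re-verify attainment of the quotient infimum in the rational setting.)

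For part (c), the $ \ell_{\infty} $-sum is handled directly from (ii): if $ \Vert x \Vert_{X} = \max_{i} |u^{*}_{i}(x)| $ and $ \Vert y \Vert_{Y} = \max_{j} |v^{*}_{j}(y)| $, then lifting $ u^{*}_{i} $ to $ (x,y) \mapsto u^{*}_{i}(x) $ and $ v^{*}_{j} $ to $ (x,y) \mapsto v^{*}_{j}(y) $ gives a finite list of functionals on $ X \oplus Y $ whose maximum absolute value equals $ \Vert (x,y) \Vert_{\infty} $. The $ \ell_{1} $-sum is then obtained by duality from (a) and the $ \ell_{\infty} $ case, using the standard identification $ (X \oplus_{1} Y)^{*} = X^{*} \oplus_{\infty} Y^{*} $; or directly from (i), since $ B_{X \oplus_{1} Y} $ is the absolutely convex hull of $ \{(a_{i}, 0)\} \cup \{(0, b_{j})\} $ whenever the $ a_{i} $'s and $ b_{j} $'s are vertices of $ B_{X} $ and $ B_{Y} $.

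For part (d), the key observation is that the set $ V $ of vertices of $ B_{X} $ is finite, and any surjective linear isometry $ f \colon X \to X $ maps extreme points of $ B_{X} $ to extreme points, hence permutes $ V $. Thus $ f|_{V} $ lies in the finite symmetric group on $ V $, so some $ n \geq 1 $ satisfies $ f^{n}|_{V} = \mathrm{id}_{V} $. Since $ B_{X} = \mathrm{abs\,conv}(V) $ is absorbing in $ X $, the set $ V $ spans $ X $ over $ \FFb $, and a linear map is determined by its values on a spanning set; therefore $ f^{n} = \mathrm{id}_{X} $. The only point one must be a little careful about is that the classical fact ``surjective linear isometries permute the vertices'' goes through over $ \QQb $ as well, which is immediate since vertices are characterized by the purely algebraic (and $ \FFb $-intrinsic) notion of extreme point of $ B_{X} $, and the equivalence with being a point of the smallest generating set for $ B_{X} $ is what was recorded right after Fact~\ref{fact1}. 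No step here looks like a real obstacle; the main care needed is consistent use of the finite-dimensional $ \QQb $-duality, which was already established in the proof of Fact~\ref{fact1}.
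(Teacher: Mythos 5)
Your proposal is correct and follows essentially the same route as the paper: duality (together with the remark after Fact~\ref{fact1} that a space with polyhedral dual is polyhedral) for parts (a), (b) and the $\ell_{1}$ case of (c), direct verification of condition (ii) for subspaces and $\ell_{\infty}$-sums, and the permutation of the finitely many vertices of $B_{X}$ for (d). The only cosmetic differences are that in (a) you verify (ii) for $X^{*}$ directly via evaluations at the generators of $B_{X}$ where the paper passes through $X^{**}\cong X$, and in (d) you spell out the spanning argument that the paper leaves implicit.
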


\begin{proof}
(a) We know from the above comment that if $ X^{*} $ is polyhedral, then $ X $ is polyhedral. Thus, if $ X^{**} $ is polyhedral, then $ X^{*} $ is polyhedral. It remains to note that $ X^{**} $ is isometric to $ X $.

(b) It is easy to see that $ Y $ is polyhedral. The dual of $ X/Y $ is isometric to the annihilator $ Y^{\bot} = \{ u^{*} \in X^{*} : (\forall y \in Y) (u^{*}(y) = 0) \} $ which, being a subspace of $ X^{*} $, is polyhedral. It follows that $ X/Y $ is polyhedral.

(c) If the norm on $ X $ is expressed by $ \Vert x \Vert = \max \{ |u^{*}_{i}(x)| : 1 \leq i \leq n \} $ and the norm on $ Y $ is expressed by $ \Vert y \Vert = \max \{ |v^{*}_{j}(y)| : 1 \leq j \leq m \} $, then
$$ \Vert (x, y) \Vert_{\infty} = \max \big( \{ |u^{*}_{i}(x)| : 1 \leq i \leq n \} \cup \{ |v^{*}_{j}(y)| : 1 \leq j \leq m \} \big), \quad $$
so the $ \ell_{\infty} $-sum of $ X $ and $ Y $ is polyhedral. Concerning the $ \ell_{1} $-sum of $ X $ and $ Y $, we just point out that its dual is isometric to the $ \ell_{\infty} $-sum of $ X^{*} $ and $ Y^{*} $.

(d) Considering the restriction of $ f $ to the vertices of $ B_{X} $, we obtain a permutation $ \pi $. It is sufficient to note that $ \pi^{n} $ is the identity permutation for some $ n $.
\end{proof}

\section{Two classes of linear spaces}

Let $\FFb$ be a field, and let $L_\FFb$ be the signature consisting of functional symbols $f_{\alpha_1, \ldots, \alpha_n}(x_1, \ldots, x_n)$, $\alpha_i \in \FFb$, and a constant symbol $\zero$. Let $N_\FFb$ be the signature obtained by expanding $L_\FFb$ by unary relational symbols $\norm{\cdot}_\alpha$, $\alpha \in \FFb$. 
Every linear space $V$ over $\FFb$ can be regarded as a structure in the signature $L_\FFb$ by defining functions $f_{\alpha_1, \ldots, \alpha_n}(x_1, \ldots, x_n)=\alpha_1 x_1+ \ldots +\alpha_n x_n$, and $\zero$ as the zero vector. Analogously, a linear space over $\FFb$ with a norm $\norm{\mbox{ }}$ taking values in $\FFb$  can be regarded as a structure in the signature $N_\FFb$ by putting $\norm{x}_\alpha$ iff $\norm{x}=\alpha$.

Let $\LLc_\QQb$ ($\LLc_\RRb$) be the class of finitely-dimensional linear spaces over $\QQb$ ($\RRb$) (and linear injections as embeddings). Let $\PPc_\QQb$ ($\PPc_\RRb$) be the class of finitely-dimensional linear spaces over $\QQb$ ($\RRb$), with polyhedral norms taking values in $\QQb$ ($\RRb$) (and linear isometric injections as embeddings). The following is well known and easy to verify:

\begin{proposition}
$\LLc_\QQb$, $\LLc_\RRb$, $\PPc_\QQb$, and $\PPc_\RRb$ have JEP and WAP. In particular, because $\LLc_\QQb$ and $\PPc_\QQb$ are countable up to isomorphism, these classes are \fra. 
\end{proposition}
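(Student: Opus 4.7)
The plan is to establish JEP and, more strongly, AP (which implies WAP) for all four classes. Hereditariness is immediate, and countability up to isomorphism for the $\QQb$-versions is clear, since a finite-dimensional rational polyhedral space is determined by a finite list of rational vertices of its unit ball. For JEP, one uses the direct sum in $\LLc_\FFb$ and the $\ell_\infty$-sum in $\PPc_\FFb$, the latter being polyhedral by Fact 3.2(c).

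For AP I would use the standard pushout. Given isometric embeddings $\alpha\colon A \to B$ and $\beta\colon A \to C$, set $N = \{(\alpha(a), -\beta(a)) : a \in A\}$ and take $D = (B \oplus C)/N$ in $\LLc_\FFb$, or $D = (B \oplus_1 C)/N$ in $\PPc_\FFb$, with embeddings $\gamma(b) = [(b,0)]$ and $\delta(c) = [(0,c)]$. Commutativity of the resulting square and injectivity of $\gamma, \delta$ are routine. In the polyhedral case, a short triangle-inequality argument gives
\[ \|\gamma(b)\|_D = \inf_{a \in A}\bigl(\|b - \alpha(a)\|_B + \|a\|_A\bigr) = \|b\|_B, \]
so $\gamma$ is isometric, and similarly $\delta$; the infimum is attained at $a = 0$ and bounded below via $\|b\|_B \leq \|b - \alpha(a)\|_B + \|\alpha(a)\|_B$. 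Polyhedrality of $D$ follows from Fact 3.2(b)--(c).

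The main obstacle, specific to $\PPc_\QQb$, is to verify that the quotient norm on $D$ takes rational values on rational vectors. I would argue this by duality: the dual of a rational polyhedral space is rational polyhedral, since if $B_X$ is the absolutely convex hull of rational points $a_1, \ldots, a_m$, then $B_{X^*}$ is the polytope cut out by the rational inequalities $|u^*(a_k)| \leq 1$, whose vertices are therefore rational. Now $D^* = N^\perp$ is a subspace of the rational polyhedral space $B^* \oplus_\infty C^*$ defined by rational linear equations, so its unit ball is a rational polytope with rational vertices, and by Fact 3.1 $D^*$ is rational polyhedral; hence so is $D = D^{**}$. Alternatively, one can view the infimum defining the quotient norm as a linear program with rational data, whose optimum is attained at a rational vertex of the feasible region.
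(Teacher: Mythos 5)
Your proof is correct, but note that the paper itself offers no argument here: the proposition is introduced with ``the following is well known and easy to verify'' and left unproved, so there is no proof of record to compare against. Your route is the natural one, and the pushout $D=(B\oplus_1 C)/N$ with $N=\{(\alpha(a),-\beta(a)):a\in A\}$ is exactly the construction the authors deploy later, in the implication (2)$\Rightarrow$(1) of Proposition~\ref{weakamalgequivalences}, where the same quotient-norm computation $\Vert\gamma(b)\Vert_D=\inf_{a\in A}\big(\Vert b+\alpha(a)\Vert+\Vert a\Vert\big)=\Vert b\Vert$ is carried out in detail; so your argument matches the paper's own toolkit rather than diverging from it. Two remarks. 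First, you rightly prove AP rather than merely WAP: as stated the proposition only claims WAP, but the ``in particular, these classes are \fra'' conclusion requires AP under the paper's definition of a \fra\ class, so your strengthening is not optional. Second, your duality detour for the rationality of the quotient norm in $\PPc_\QQb$ is sound but largely redundant: parts (b) and (c) of the paper's second fact on polyhedral spaces are stated and proved for $\FFb=\QQb$ as well as $\FFb=\RRb$, so closedness of the rational polyhedral spaces under $\ell_1$-sums and quotients can simply be cited, as you do in your penultimate sentence. The one point worth keeping explicit over $\QQb$ is that the quotient norm is $\QQb$-valued (equivalently, that the infimum defining it is attained at a rational point); this is precisely what that fact delivers via the identification of $(X/Y)^{*}$ with the annihilator $Y^{\bot}$, and your linear-programming remark is an acceptable alternative justification.
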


The \fra \ limit $\LLf_\QQb$ of $\LLc_\QQb$ is the infinite-dimensional countable linear space over $\QQb$.
The \fra \ limit $\GGf_\QQb$ of $\PPc_\QQb$ is a normed linear space over $\QQb$ that we will call the \emph{rational Gurarii space}. Clearly, $\Aut(\LLf_\QQb)$ is the linear isomorphism group of $\LLf_\QQb$, and $\Aut(\GGf_\QQb)$ is the linear isometry group of $\GGf_\QQb$.

Recall that the \emph{Gurarii space} $\GGf$ is the unique separable Banach space having the property that for any $\epsilon> 0$, finite dimensional Banach spaces $E \subseteq F$, and isometric embedding $\phi : E \to \GGf$, there is a linear embedding $\Phi : F \to \GGf$ extending $\phi$ such that in addition $(1-\epsilon) \norm{x} < \norm{\Phi(x)}< (1+\epsilon) \norm{x}$, for all $x \in F \setminus \{ 0 \} $. It can also be characterized (see, e.g., \cite[Theorem 2.7]{GaKu}), as the unique separable Banach space such that for any $\epsilon> 0$, finite-dimensional Banach spaces $E \subseteq F$, and isometric embedding $\phi : E \to \GGf$, there is an isometric embedding $\Phi : F \to \GGf$ such that $\norm{\Phi \upharpoonright E - \phi}<\epsilon$. Note that finite-dimensional rational Banach spaces, as defined in \cite[Section 2.2]{GaKu2}, are exactly completions of spaces from $\PPc_\QQb$. Moreover, $\GGf$ is the completion of $\GGf_\QQb$. We sketch the proof of this folklore result for the sake of comprehensiveness.

\begin{proposition}
The completion of the rational Gurarii space $\GGf_\QQb$ is the Gurarii space $\GGf$. 
\end{proposition}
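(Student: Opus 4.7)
The plan is to show that $X := \widehat{\GGf_\QQb}$, a separable real Banach space containing $\GGf_\QQb$ as a dense $\QQb$-linear subspace, satisfies the defining property of the Gurarii space. It will be convenient to verify the following hybrid form: for every $\eta, \delta > 0$, every pair $E \subseteq F$ of finite-dimensional real Banach spaces, and every isometric embedding $\phi\colon E \to X$, there exists a linear embedding $\Phi\colon F \to X$ with $(1-\delta)\norm{x} \leq \norm{\Phi(x)} \leq (1+\delta)\norm{x}$ and $\norm{\Phi|_E - \phi} < \eta$. Either of the two characterizations of $\GGf$ recalled in the paper follows from this by a standard perturbation: given such a $\Phi$, the map $\Phi' := \Phi + (\phi - \Phi|_E) \circ P$ for a bounded linear projection $P\colon F \to E$ extends $\phi$ exactly and is a $(1+\delta+\eta\norm{P})$-isomorphism.

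Fix bases $e_1, \ldots, e_k$ of $E$ and $e_1, \ldots, e_m$ of $F$. First I would use density of $\GGf_\QQb$ in $X$ to pick $\QQb$-linearly independent vectors $z_1, \ldots, z_k \in \GGf_\QQb$ with each $\norm{z_i - \phi(e_i)}_X$ as small as desired. Setting $E_0 := \spn_\QQb(z_1, \ldots, z_k)$, the space $E_0$ lies in $\PPc_\QQb$ as a finite-dimensional $\QQb$-subspace of $\GGf_\QQb$, and the $\RRb$-linear bijection $\alpha\colon E \to (E_0)_\RRb \subseteq X$ defined by $\alpha(e_i) = z_i$ is arbitrarily close to $\phi$ and hence a $(1+\delta_0)$-isomorphism, with $\delta_0$ controlled by the smallness of the perturbations $\norm{z_i - \phi(e_i)}_X$.

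The central step, which I expect to be the main technical obstacle, is to extend this to an ambient space: I would construct $F_0 \in \PPc_\QQb$ with $E_0 \subseteq F_0$ and $\dim_\QQb F_0 = m$, together with an $\RRb$-linear bijection $\beta\colon F \to (F_0)_\RRb$ extending $\alpha$ and having the same $(1+\delta_0)$-distortion. Abstractly, one introduces new $\QQb$-generators $w_{k+1}, \ldots, w_m$ representing $e_{k+1}, \ldots, e_m$ and equips the resulting $\QQb$-vector space of dimension $m$ with a rational polyhedral norm that (i) restricts on $E_0$ to the already fixed rational polyhedral norm there and (ii) closely approximates the $F$-norm under the identification $z_i \leftrightarrow e_i$, $w_j \leftrightarrow e_j$. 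The two requirements are approximately compatible because the existing $E_0$-norm is already close to the restriction of $\norm{\cdot}_F$; the extension is obtained from a standard polyhedral perturbation lemma, whose proof combines Hahn--Banach extension of the functionals defining the $E_0$-norm with a rational approximation of additional functionals capturing the $F$-geometry off $E$.

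Once $F_0$ is in place, the Fra\"iss\'e extension property of $\GGf_\QQb$, applied to the isometric inclusion $E_0 \subseteq \GGf_\QQb$ and the extension $E_0 \subseteq F_0$ inside $\PPc_\QQb$, yields an isometric $\QQb$-linear embedding $\iota\colon F_0 \to \GGf_\QQb$ restricting to the identity on $E_0$. By continuity, $\iota$ extends uniquely to an isometric embedding $\iota_\RRb\colon (F_0)_\RRb \to X$. Setting $\Phi := \iota_\RRb \circ \beta$, one sees that $\Phi$ inherits the $(1+\delta_0)$-distortion from $\beta$ and satisfies $\Phi(e_i) = \iota_\RRb(z_i) = z_i$ for $i \leq k$, so $\norm{\Phi|_E - \phi}$ is controlled by $\max_i \norm{z_i - \phi(e_i)}_X$ multiplied by the basis constant of $(e_i)$. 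Choosing the initial perturbations small enough forces $\delta_0 < \delta$ and $\norm{\Phi|_E - \phi} < \eta$, completing the verification of the hybrid form and hence identifying $X$ with $\GGf$.
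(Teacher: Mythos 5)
Your argument is correct in outline, but it takes a genuinely different route from the paper's. You verify the defining almost-extension property of the Gurarii space directly for the completion $X$ of $\GGf_\QQb$ --- rationally approximate $E$ inside $\GGf_\QQb$, extend to a rational polyhedral $F_0\supseteq E_0$ approximating $F$, realize $F_0$ over $E_0$ inside $\GGf_\QQb$ via the \fra\ extension property, and conclude by uniqueness of $\GGf$. The paper instead builds an explicit isometry $f\colon \GGf_\QQb\to\GGf$ with dense image by a back-and-forth: odd steps use the approximate-extension characterization of $\GGf$ to catch a basis of $\GGf_\QQb$, even steps use a one-point rational approximation claim together with amalgamation in $\PPc_\QQb$ to catch a dense sequence in $\GGf$. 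Your route is shorter and more conceptual, trading the back-and-forth bookkeeping for an appeal to the uniqueness of $\GGf$ and to the extension property of the \fra\ limit; the paper's route produces the isometry explicitly and only ever needs one-point extensions. Both hinge on the same technical core, which you correctly identify as the main obstacle and defer to a ``standard polyhedral perturbation lemma'': this is precisely \cite[Lemma 2.4]{GaKu2} (combined with \cite[Lemma 1.6]{BYHe} in the paper's version), except that you need the several-new-vectors form, obtained by iterating the one-point statement; since the paper also cites this without proof, your deferral is at the same level of rigor. Two small imprecisions: the $z_i$ must be chosen $\RRb$-linearly independent (automatic for small perturbations of the independent vectors $\phi(e_i)$, but ``$\QQb$-linearly independent'' as written is not enough for $(E_0)_\RRb$ to be $k$-dimensional), and the extension $\beta$ will in general have distortion slightly worse than $(1+\delta_0)$ rather than ``the same''; neither affects the argument, as all constants can be made arbitrarily small.
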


\begin{proof}
Let $\HHf$ be the completion of $\GGf_\QQb$, and let $\norm{ \ }$ denote the norm on $\GGf$. We start with the following observation. 

\emph{Claim}: Fix $\epsilon>0$, set $A=\{a_0, \ldots, a_k\}$ of unit vectors in $\GGf_\QQb$, $f:A \to \GGf$ that gives rise to an isometry between $\spn A$ (in $\HHf$) and $\spn f[A]$, and a unit vector $y \in \GGf$. There are unit vectors $y'_0, \ldots, y'_k ,y' \in \GGf$ such that
\begin{itemize}
\item $\norm{f(a_i)-y'_i}<\epsilon$, $\norm{y-y'}<\epsilon$,
\item $a_i \mapsto y'_i$ gives rise to an isometric isomorphism of $\spn A$ and $\spn \{y'_0, \ldots, y'_k\}$,
\item the linear space over $\QQb$ spanned by $\{y'_0, \ldots, y'_k,y'\}$ is in $\PPc_\QQb$. 
\end{itemize}

To show the claim, note that Lemma 2.4 in \cite{GaKu2} and its proof gives that there exists a norm $\norm{ \ }'_0$ on $B'_0=\spn \{f(a_0), \ldots,f(a_k),y \}$ that is $\epsilon$-equivalent to $\norm{ \ }$ on $B'_0$ (i.e., $(1-\epsilon) \norm{x} < \norm{x}'_0< (1+\epsilon) \norm{x}$ for $x \in B'_0, x \neq \zero $), equal to $\norm{ \ }$ on $\spn \{f(a_0), \ldots, f(a_k)\}$, equal to $1$ at $y$, and that turns $B'_0$ into a rational space in the sense that the linear space over $\QQb$ spanned by $\{f(a_0), \ldots, f(a_k),y\}$ equipped with $\norm{ \ }'_0$ is in $\PPc_\QQb$. It is easy to observe (see, e.g., \cite[Lemma 1.6]{BYHe}) that then there exists a semi-norm $\norm{ \ }'$ on $B'=(B'_0,\norm{ \ }) \oplus (B'_0,\norm{ \ }'_0)$ that extends $\norm{ \ }$ and $\norm{ \ }'_0$, and, moreover, $\norm{(f(a_i),\zero)-(\zero,f(a_i))}'<\epsilon$, $\norm{(y,\zero)-(\zero,y)}'<\epsilon$. By the above characterization of the Gurarii space, we can assume that actually $B'/\norm{ \ }'$ is a subspace of $\GGf$ such that $\norm{f(a_i)-(\zero,f(a_i))}<\epsilon$, $\norm{y-(\zero,y)}<\epsilon$. 
In other words, $y'_i=(\zero,f(a_i))$, $y'=(\zero,y)$ witness that the claim holds.

Now, to finish the proof of the proposition, we will construct an isometric injection $f:\GGf_\QQb \to \GGf$ whose image is dense in $\GGf$, and $f(\zero)=\zero$. Let $X=\{x_i\}$ be all the unit vectors in $\GGf_\QQb$, and let $\{y_i\}$ be a dense subset of $\GGf \setminus \{0\}$. Put $f_0(\zero)=\zero$, and suppose that injections $f_i: X_i \to \GGf$, $i \leq n$, have been defined, so that, for $1  \leq i \leq n$,
\begin{itemize}
\item $X_i \subseteq X$ is finite, and $X_{i-1} \subseteq X_{i}$,
\item $f_i$ gives rise to an isometry between $\spn X_i$ and $\spn f_i[X_i]$,
\item $\norm{f_{i}(x)-f_{i-1}(x)}<2^{-i}$, for $x \in X_{i-1}$,
\item $x_i \in \spn X_{2i}$, if $2i \leq n$,
\item $\norm{y-y_i}<2^{-i}$ for some $y  \in \spn f_{2i+1}[X_{2i+1}]$, if $2i+1 \leq n$.
\end{itemize}
For odd $n$, we use the above characterization of the Gurarii space to construct a required $f_{n+1}$. For even $n$, suppose that $y_{n/2} \not \in \spn f_n[X_n]$. We fix $y'_0, \ldots, y'_k$, $y' \in \GGf$ as in the claim for a sufficiently small $\epsilon$, $a_0, \ldots, a_k$ enumerating $X_n$, $f=f_n$ and $y=y_{n/2}/\norm{y_{n/2}}$. Then, applying amalgamation in $\PPc_\QQb$, we find $l \in \NNb$ such that $f_{n+1}$ defined by $f_{n+1}(a_i)=y'_i$, $f_{n+1}(x_l)=y'$ witnesses that $\spn X_n \cup \{x_l\}$ (in $\HHf$) is isometric with $\spn f_{n+1}[X_n] \cup \{y'\}$. Finally, we define $f(x_n)=\lim_m f_m(x_n)$.
\end{proof}

Now we point out that $\Aut(\LLf_\QQb)$, similarly to $\Aut(\HHf_\QQb)$, where $\HHf_\QQb$ is a countable counterpart of the separable Hilbert space studied in \cite{Ro}, has ample generics.

\begin{proposition}
The classes $\LLc_\QQb$ and $\LLc_\RRb$ have the Hrushovski property. In particular, $\Aut(\LLf_\QQb)$ has ample generics.      
\end{proposition}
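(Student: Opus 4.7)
My plan for the Hrushovski property is to take $B=A$ itself. A partial automorphism $f$ of $A\in\LLc_\FFb$ is a linear isomorphism $U\to V$ between subspaces of $A$; since $\dim U=\dim V$, any choice of complements $U'$ of $U$ and $V'$ of $V$ in $A$ satisfies $\dim U'=\dim V'$, and gluing $f$ with any linear isomorphism $U'\to V'$ yields a full linear automorphism of $A$ extending $f$. This extension can be produced independently for each $f_i$, so $B=A$ witnesses the $n$-Hrushovski property for every $n$.

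To derive that $\Aut(\LLf_\QQb)$ has ample generics, I would appeal to Theorem~\ref{th:kechros} and verify that $(\LLc_\QQb)_n$ has JEP and WAP for every $n$. JEP is immediate: given $(A_1,f_i^1)$ and $(A_2,f_i^2)$, use Hrushovski to extend each $f_i^j$ to a full automorphism $\tilde f_i^j$ of $A_j$, and take the common extension $(A_1\oplus A_2,\tilde f_i^1\oplus\tilde f_i^2)$.

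For WAP, my proposed witness for $(A,f_i)$ is $(A,\tilde f_i)$, where each $\tilde f_i$ is a full automorphism of $A$ extending $f_i$, and $\phi$ is the identity on $A$. The key consequence is that whenever $(A,\tilde f_i)$ embeds into some $(B,g_i)$, identifying $A$ with its image forces $A\subseteq\mathrm{dom}(g_i)\cap\mathrm{range}(g_i)$ and $g_i\upharpoonright A=\tilde f_i$, because $\tilde f_i$ is total on $A$. Given two such extensions $(B,g_i^1)$ and $(C,g_i^2)$, I would form the linear pushout $D=B\oplus_A C$ (in which $B\cap C=A$) and, on $\mathrm{dom}(g_i^1)+\mathrm{dom}(g_i^2)\subseteq D$, define $p_i$ to be the linear extension of $g_i^1\cup g_i^2$. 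Both well-definedness and injectivity rest on the identity $g_i^1\upharpoonright A=\tilde f_i=g_i^2\upharpoonright A$: if $b+c=b'+c'$ with $b,b'\in\mathrm{dom}(g_i^1)$ and $c,c'\in\mathrm{dom}(g_i^2)$, then $b-b'\in B\cap C=A$, and $g_i^1(b-b')=\tilde f_i(b-b')=g_i^2(c'-c)$; and if $p_i(b+c)=0$, then $g_i^1(b)=-g_i^2(c)\in A$, forcing $b,c\in A$ by the bijectivity of $\tilde f_i$ on $A$, so $\tilde f_i(b+c)=0$ and $b+c=0$.

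The main subtlety will lie precisely in the interplay between the Hrushovski step and the amalgamation step for WAP. Without first totalizing $f_i$ to $\tilde f_i$ on $A$, the domains $\mathrm{dom}(g_i^1)\cap A$ and $\mathrm{dom}(g_i^2)\cap A$ could properly contain $\mathrm{dom}(f_i)$ and disagree there, obstructing the definition of $p_i$; engineering the WAP witness via Hrushovski is exactly what rules this out and makes the pushout carry a coherent partial automorphism.
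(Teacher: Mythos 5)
Your proposal is correct and follows essentially the same route as the paper: the paper also totalizes each partial automorphism inside $A$ itself by mapping a complement of the domain onto a complement of the range (there via the explicit assignment $x_i=f(x_{n-i})$), uses the resulting total pair $(A,\tilde f_i)$ as the WAP witness, and amalgamates two extensions by the pushout over $A$, exactly as you do. The only cosmetic difference is that the paper packages the last step as the existence of a cofinal subfamily of $(\LLc_\QQb)_n$ with AP, whereas you verify WAP directly.
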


\begin{proof}
We show that $\LLc_\QQb$ has the $n$-Hrushovski property, and that $(\LLc_\QQb)_n$ has WAP, for $n=1$. The proofs for $n>1$, and for $\LLc_\RRb$, are analogous. Fix $(X,f:E \to F) \in (\LLc_\QQb)_1$. Without loss of generality, we can assume that $E \cup F$ generates $X$. Fix a basis $x_0, \ldots, x_n$ in $X$ such that, for some $j,k \leq n$, $E=\spn \{x_0, \ldots, x_k\}$, and $F=\spn \{x_j, \ldots, x_n\}$ (i.e., $E \cap F= \spn \{x_j, \ldots, x_k\}$). Since the range of $f$ is $F$, we have $n=j+k$. We extend $f$ by putting $x_i=f(x_{n-i})$ for $i<j$.

It follows that there is a subfamily $\mathcal{F} \subseteq (\LLc_\QQb)_1$ that has AP, and is cofinal in $(\LLc_\QQb)_1$, i.e., for every $(X,f:E \to F) \in (\LLc_\QQb)_1$ there is $(X',f':E' \to F') \in \mathcal{F}$ that embeds $(X,f:E \to F)$. In particular, $(\LLc_\QQb)_1$ has weak amalgamation. Indeed, for $(A,f:B \to C) \in (\LLc_\QQb)_1$, fix $(A_0,f_0:A_0 \to A_0) \in (\LLc_\QQb)_1$ such that $f_0$ extends $f$. As $A_0$ is obviously invariant under every extension of $f_0$, for any $(A_1,f_1:B_1 \to C_1), (A_2,f_2:B_2 \to C_2) \in (\LLc_\QQb)_1$ such that $f_1$, $f_2$ extend $f_0$, we can assume that $A_1 \cap A_2=A_0$. Write $A_1=A_0 \oplus A'_1$, $f_1=(f_0,f'_1)$, $A_2=A_0 \oplus A'_2$, $f_2=(f_0,f'_2)$. It is immediate that $(A_0 \oplus A'_1 \oplus A'_2,(f_0,f'_1, f'_2))$ is the required amalgam. Note that JEP amounts to amalgamation over the trivial space, so $(\LLc_\QQb)_1$ has JEP as well. Using Theorem \ref{th:kechros}, we conclude that $\Aut(\LLf_\QQb)$ has a comeager conjugacy class.

In the same way, we can show that $(\LLc_\QQb)_n$ has JEP and WAP for every $n \in \NNb$, i.e., that $\Aut(\LLf_\QQb)$ has ample generics.
\end{proof}

\begin{theorem}
\label{th:weakamalg}
The class $\PPc_\QQb$ does not have the $1$-Hrushovski property, and $(\PPc_\QQb)_1$ does not have WAP. In particular, $\Aut(\GGf_\QQb)$ does not have a comeager conjugacy  class.
\end{theorem}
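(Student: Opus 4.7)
Both failures exploit Fact~(d): every total linear isometry of a polyhedral space has finite order, so $\Iso(B)$ is finite for every $B \in \PPc_\QQb$. The particular consequence I use is: if $g$ is a partial isometry of a polyhedral space $B$ with $\mathrm{dom}(g) = \mathrm{ran}(g)$ as subspaces, then $g$ restricts to a total isometry of this polyhedral subspace, and thus has finite order. Conversely, any linear map of the form $v \mapsto v + u$ with $u \neq 0$ has infinite order on any subspace containing it, so such a map cannot be realized on a polyhedral subspace $Y$ with $\mathrm{dom} = \mathrm{ran} = Y$.

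\textbf{Failure of the $1$-Hrushovski property.} I plan to construct $(A, f_0) \in (\PPc_\QQb)_1$ that admits no extension to a total isometry of any polyhedral $B \supseteq A$. The idea is to engineer $f_0$ so that any hypothetical extension $g$ to a total isometry of $B$ would force, on some polyhedral subspace $Y \subseteq B$ stable under $g$, a linear shift of the form $g|_Y(v) = v + u$ with $u \neq 0 \in Y$; since such a map has infinite order, this contradicts Fact~(d). The construction should use rationally chosen unit vectors in $A$ positioned so that polyhedrality and rationality propagate this obstruction through all possible extensions, for instance by prescribing pairwise distances between vectors in $\mathrm{dom}(f_0)$ and $\mathrm{ran}(f_0)$ that leave no room for a non-shift finite-order realization.

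\textbf{Failure of WAP for $(\PPc_\QQb)_1$ and conclusion.} Given any $(A, f) \in (\PPc_\QQb)_1$ and any embedding $(A, f) \hookrightarrow (A', f')$, I will produce two extensions $(B_1, g_1), (B_2, g_2) \in (\PPc_\QQb)_1$ of $(A', f')$ with no common amalgam. The construction enlarges the domain of $f'$ in each $B_i$ by adjoining a new vector $v_i$ and prescribes the polyhedral norm on $B_i$ so that in any hypothetical amalgam $(D, h)$ the partial isometry $h \supseteq g_1, g_2$ would act on a finite-dimensional polyhedral subspace $Y \subseteq D$ with $\mathrm{dom}(h|_Y) = \mathrm{ran}(h|_Y)$ via a nonzero shift, again contradicting Fact~(d). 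The main technical obstacle is making this work uniformly in $(A', f')$: when $f'$ is far from a total isometry of $A'$ one has ample freedom to engineer the desired structure on one side, but when $f'$ already behaves like a total isometry of $A'$ the shift must be assembled from the combined behavior of $g_1$ and $g_2$ rather than either alone, which requires delicate balancing of the two polyhedral norms. Combined with the JEP of $(\PPc_\QQb)_1$ (a direct-sum amalgamation as in the preceding proposition's proof for $(\LLc_\QQb)_1$), the failure of WAP yields via Theorem~\ref{th:kechros} applied with $n = 1$ that $\Aut(\GGf_\QQb)$ has no comeager conjugacy class.
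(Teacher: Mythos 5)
Your proposal correctly identifies the engine of the argument --- Fact (d), that every surjective linear isometry of a polyhedral space has finite order --- and correctly wires up the endgame (JEP plus Theorem~\ref{th:kechros} with $n=1$). But as written it is a plan rather than a proof, and it has two genuine gaps plus one conceptual error.

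First, the obstruction you propose to propagate is ill-posed: a map of the form $v \mapsto v+u$ with $u \neq 0$ is affine, not linear, so it can never arise as (the restriction of) a linear isometry in the first place; ``infinite order'' is not the issue. The actual obstruction in the paper is quantitative. Take $C=\QQb^2$ with $\Vert(x,y)\Vert=|x|+|y|$, $A$ the $x$-axis, $B$ the diagonal, and $f(t,0)=(\tfrac12 t,\tfrac12 t)$. Setting $a_i=(2^{-i},0)$, one has $\Vert a_0-f(a_{n-1})\Vert=1$ while $\sum_{i=0}^{n-2}\Vert a_{i+1}-f(a_i)\Vert=1-2^{-(n-1)}$; if $g$ were a surjective isometric extension with $g^n=\mathrm{id}$, telescoping $g^n(a_0)-g(a_{n-1})$ and applying the triangle inequality forces $1\leq 1-2^{-(n-1)}$, a contradiction. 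Your sketch never produces such an example nor any inequality playing this role; ``prescribing pairwise distances that leave no room'' is exactly the step that needs to be carried out.

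Second, and more seriously, your route to the failure of WAP is the direct one: for \emph{every} extension $(A',f')$ of $(A,f)$, build two further extensions with no common amalgam. You yourself flag that making this uniform in $(A',f')$ is the hard part, and that difficulty is not resolved in the proposal --- it is precisely what the paper's Lemma~\ref{keylemma} exists to handle. That lemma shows that if WAP held at $\mathcal{O}=(C,f:A\to B)$, then $\mathcal{O}$ would embed into some $(E,g:E\to E)$ with $g$ a surjective isometry; its proof (all of Section 5) is a genuinely nontrivial construction involving a shift space $\mathfrak{D}/\mathfrak{N}$, dual functionals, and a stabilization argument. Without this reduction (or Proposition~\ref{weakamalgequivalences}(3), which extracts the clean inequality criterion), a single counterexample to total extendability refutes the $1$-Hrushovski property but does \emph{not} yet refute WAP, since WAP only requires amalgamation over some sufficiently large extension. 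So the proposal establishes neither half of the theorem as it stands.
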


\begin{proof}
The major part of the proof that $(\PPc_\QQb)_1$ does not have WAP will be accomplished in the next section, where we prove Lemma~\ref{keylemma}. According to this lemma, it is sufficient to find an example of $ \mathcal{O} = (C, f : A \to B) $ for which there is no extension $ D $ of $ C $ that admits an extension $ g : D \to D $ of $ f $ that is a surjective linear isometry. Such an example demonstrates at the same time that $\PPc_\QQb$ does not have the $1$-Hrushovski property. Although the following example is quite simple, perhaps it is not clear where the method comes from. A kind of clarification will be provided later in Proposition~\ref{weakamalgequivalences}.

Let $ C = \mathbb{Q}^{2} $ with the norm $ \Vert (x, y) \Vert = |x| + |y| $. Let $ A = \{ (t, 0) : t \in \mathbb{Q} \} $, $ B = \{ (t, t) : t \in \mathbb{Q} \} $ and $ f(t, 0) = (\frac{1}{2}t, \frac{1}{2}t), t \in \mathbb{Q} $. Assume that suitable $ D $ and $ g $ exist. Since the norm of $ D $ is polyhedral, there is $ n \in \mathbb{N} $ such that $ g^{n} = id_{D} $. Let us consider $ a_{i} = (2^{-i}, 0) $ for $ 0 \leq i \leq n-1 $. Then
\begin{align*}
\Vert a_{0} - f(a_{n-1}) \Vert & = \big\Vert (1, 0) - (\tfrac{1}{2} 2^{-(n-1)}, \tfrac{1}{2} 2^{-(n-1)}) \big\Vert = |1-2^{-n}| + 2^{-n} = 1, \\
\Vert a_{i+1} - f(a_{i}) \Vert & = \big\Vert (2^{-(i+1)}, 0) - (\tfrac{1}{2} 2^{-i}, \tfrac{1}{2} 2^{-i}) \big\Vert \\
 & = |2^{-(i+1)} - 2^{-(i+1)}| + 2^{-(i+1)} = 2^{-(i+1)},
\end{align*}
and so
\begin{align*}
1 & = \Vert a_{0} - f(a_{n-1}) \Vert = \Vert g^{n}(a_{0}) - g(a_{n-1}) \Vert \\
 & = \Big\Vert \sum_{i=0}^{n-2} g^{n-i}(a_{i}) - g^{n-(i+1)}(a_{i+1}) \Big\Vert \\
 & \leq \sum_{i=0}^{n-2} \big\Vert g^{n-i}(a_{i}) - g^{n-(i+1)}(a_{i+1}) \big\Vert = \sum_{i=0}^{n-2} \Vert g(a_{i}) - a_{i+1} \Vert \\
 & = \sum_{i=0}^{n-2} \Vert f(a_{i}) - a_{i+1} \Vert = \sum_{i=0}^{n-2} 2^{-(i+1)} = 1 - 2^{-(n-1)},
\end{align*}
a contradiction.
\end{proof}

In the next proposition, we provide more aspects of the results stated in Theorem~\ref{th:weakamalg}. Here, we can consider both fields $ \mathbb{F} = \mathbb{Q} $ and $ \mathbb{F} = \mathbb{R} $.

\begin{proposition} \label{weakamalgequivalences}
For every $ \mathcal{O} = (C, f : A \to B) \in (\PPc_\FFb)_1 $, the following assertions are equivalent:

(1) There exist $ \mathcal{O}_{1} \in (\PPc_\FFb)_1$ and an embedding $ i : \mathcal{O} \to \mathcal{O}_{1}$ such that for any $ \mathcal{O}_{2} $, $ \mathcal{O}_{3} \in (\PPc_\FFb)_1$ and embeddings $ j : \mathcal{O}_{1} \to \mathcal{O}_{2} $, $ k : \mathcal{O}_{1} \to \mathcal{O}_{3} $, there are $ \mathcal{O}_{4} \in (\PPc_\FFb)_1$ and embeddings $ \widetilde{j} : \mathcal{O}_{2} \to \mathcal{O}_{4} $, $ \widetilde{k} : \mathcal{O}_{3} \to \mathcal{O}_{4} $ such that $ \widetilde{j} \circ j \circ i = \widetilde{k} \circ k \circ i $.

(2) There exists $ \mathcal{O}' \in (\PPc_\FFb)_1 $ of the form $ \mathcal{O}' = (E, g : E \to E) $ that admits an embedding $ i' : \mathcal{O} \to \mathcal{O}' $.

(3) There exists $ n \in \mathbb{N} $ such that for any $ a_{0}, \dots, a_{n-1} \in A $, we have
$$ \Vert a_{0} - f(a_{n-1}) \Vert \leq \sum_{i=0}^{n-2} \Vert a_{i+1} - f(a_{i}) \Vert. $$
\end{proposition}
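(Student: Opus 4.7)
The approach is to split the proposition into $(1) \Leftrightarrow (2)$, supplied by Lemma~\ref{keylemma} from the next section, and $(2) \Leftrightarrow (3)$, which I would handle directly.

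For $(2) \Rightarrow (3)$, I would replay the calculation from the proof of Theorem~\ref{th:weakamalg}. If $\mathcal{O}' = (E, g : E \to E)$ extends $\mathcal{O}$, then $g$ is a surjective linear isometry on the finite-dimensional polyhedral space $E$, so some power satisfies $g^n = \mathrm{id}_E$. Telescoping
$$a_0 - f(a_{n-1}) = g^n(a_0) - g(a_{n-1}) = \sum_{i=0}^{n-2}\bigl(g^{n-i}(a_i) - g^{n-i-1}(a_{i+1})\bigr)$$
and using that $g$ is an isometry then gives the desired inequality.

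For $(3) \Rightarrow (2)$, I would build $(E, g)$ as a cyclic gluing of $n$ copies of $C$. Let $V$ be the $\ell_1$-sum of $n$ copies of $C$, let $\sigma : V \to V$ be the shift $\sigma(v_0, \ldots, v_{n-1}) = (v_{n-1}, v_0, \ldots, v_{n-2})$, and let $N \subseteq V$ be the $\sigma$-invariant linear subspace generated by all vectors $(-f(a), a, 0, \ldots, 0)$ with $a \in A$. Set $E := V/N$ with the quotient norm and let $g : E \to E$ be the linear isometry induced by $\sigma$. Since polyhedrality (with $\FFb$-valued norm) is preserved under finite $\ell_1$-sums and quotients, $E \in \PPc_\FFb$; by construction $g$ is surjective with $g^n = \mathrm{id}_E$, and the map $\iota : C \to E$ defined by $\iota(c) = [(c, 0, \ldots, 0)]$ is a linear injection satisfying $g \circ \iota|_A = \iota \circ f$.

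The main obstacle, and the only place where hypothesis $(3)$ is used, is verifying that $\iota$ is isometric. Any $m \in N$ can be written as $m = \sum_{k=0}^{n-1} \sigma^k(-f(a_k), a_k, 0, \ldots, 0)$ for some $a_0, \ldots, a_{n-1} \in A$, and a direct coordinate calculation gives
$$\|(c, 0, \ldots, 0) + m\|_1 = \|c - f(a_0) + a_{n-1}\| + \sum_{j=1}^{n-1} \|a_{j-1} - f(a_j)\|.$$
Applying $(3)$ to the reversed tuple $(a_{n-1}, a_{n-2}, \ldots, a_0)$ yields $\|a_{n-1} - f(a_0)\| \leq \sum_{j=1}^{n-1} \|a_{j-1} - f(a_j)\|$; combined with the triangle inequality for the decomposition $c = (c - f(a_0) + a_{n-1}) - (a_{n-1} - f(a_0))$, this produces $\|(c, 0, \ldots, 0) + m\|_1 \geq \|c\|$, and hence $\|\iota(c)\|_E = \|c\|$. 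The use of the $\ell_1$ sum is essential here: starting instead from the $\ell_\infty$ sum, the analogous chain of estimates only yields $\|c\| \leq nM$, where $M$ is the maximum coordinate norm, which is too weak.
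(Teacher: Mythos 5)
Your treatment of $(2)\Leftrightarrow(3)$ is correct and essentially the paper's argument: the $(2)\Rightarrow(3)$ telescoping is identical, and your $(3)\Rightarrow(2)$ construction is the paper's cyclic quotient $C^{n}/N$ with the $\ell_{1}$-norm, only with the shift running in the opposite direction, which is why you must feed the reversed tuple into $(3)$; the coordinate computation and the two applications of the triangle inequality check out, and the appeal to closure of $\PPc_\FFb$ under $\ell_1$-sums and quotients is covered by the paper's Fact on polyhedral spaces.

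There is, however, a genuine gap at the start: you assert that Lemma~\ref{keylemma} supplies the full equivalence $(1)\Leftrightarrow(2)$, but that lemma is precisely the implication $(1)\Rightarrow(2)$ and nothing more. The converse $(2)\Rightarrow(1)$ is not automatic and is not a formality: one must show that $\mathcal{O}_1=(E,g:E\to E)$ is a genuine weak amalgamation base, i.e.\ that any two extensions $\mathcal{O}_2,\mathcal{O}_3$ of it can be amalgamated over (the image of) $C$. The paper does this by forming the pushout $C_4=(C_2\oplus_1 C_3)/N$ with $N=\{(j(e),-k(e)):e\in E\}$, checking that $\widetilde{j},\widetilde{k}$ are isometric embeddings, and — the key point — that the induced partial map $f_4$ is an isometry; that last computation reparametrizes the infimum defining the quotient norm by $e\mapsto g(e)$ and therefore uses essentially that $g$ is a \emph{surjective} isometry of all of $E$ (for a general partial isometry the pushout would not preserve the norm of the induced map). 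Without this direction your three implications $(1)\Rightarrow(2)$, $(2)\Rightarrow(3)$, $(3)\Rightarrow(2)$ do not close the cycle, so the proposition as stated is not proved. You need to either supply this pushout argument or route back to $(1)$ some other way.
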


\begin{proof}
(1) $ \Rightarrow $ (2): This is nothing but Lemma~\ref{keylemma}.

(2) $ \Rightarrow $ (1): We put $ \mathcal{O}_{1} = \mathcal{O}' = (E, g : E \to E) $ and $ i = i' : C \to E $. Let $ \mathcal{O}_{2}=(C_2, f_2 : A_2 \to B_2) $, $ \mathcal{O}_{3}=(C_3, f_3 : A_3 \to B_3) \in (\PPc_\FFb)_1$ and embeddings $ j : \mathcal{O}_{1} \to \mathcal{O}_{2} $, $ k : \mathcal{O}_{1} \to \mathcal{O}_{3} $ be given. Considering the norm $ \Vert (c_{2}, c_{3}) \Vert = \Vert c_{2} \Vert + \Vert c_{3} \Vert $ on $ C_{2} \oplus C_{3} $, we define
$$ C_{4} = (C_{2} \oplus C_{3})/N, \quad \textrm{where $ N = \{ (j(e), -k(e)) : e \in E \} $,} $$
$$ \widetilde{j} : C_{2} \to C_{4}, \quad \widetilde{j}(c_{2}) = (c_{2}, 0) + N, \quad c_{2} \in C_{2}, $$
$$ \widetilde{k} : C_{3} \to C_{4}, \quad \widetilde{k}(c_{3}) = (0, c_{3}) + N, \quad c_{3} \in C_{3}, $$
$$ A_{4} = \mathrm{span} (\widetilde{j}(A_{2}) \cup \widetilde{k}(A_{3})), \quad B_{4} = \mathrm{span} (\widetilde{j}(B_{2}) \cup \widetilde{k}(B_{3})). $$
To show that
$$ f_{4}(\widetilde{j}(a_{2})) = \widetilde{j}(f_{2}(a_{2})), \; a_{2} \in A_{2}, \quad f_{4}(\widetilde{k}(a_{3})) = \widetilde{k}(f_{3}(a_{3})), \; a_{3} \in A_{3}, $$
well-defines a linear mapping $ f_{4} $ on $ A_{4} $, we need to check that
$$ \widetilde{j}(a_{2}) = \widetilde{k}(a_{3}) \quad \Rightarrow \quad \widetilde{j}(f_{2}(a_{2})) = \widetilde{k}(f_{3}(a_{3})), $$
or equivalently $ (a_{2}, -a_{3}) \in N \Rightarrow (f_{2}(a_{2}), -f_{3}(a_{3})) \in N $ for $ a_{2} \in A_{2}, a_{3} \in A_{3} $. For $ (a_{2}, -a_{3}) \in N $, there is $ e \in E $ such that $ a_{2} = j(e) $ and $ a_{3} = k(e) $, and so $ (f_{2}(a_{2}), -f_{3}(a_{3})) = (f_{2}(j(e)), -f_{3}(k(e))) = (j(g(e)), -k(g(e))) \in N $. Thus, $ f_{4} $ is indeed well defined. Let us show that it is an isometry of $ A_{4} $ onto $ B_{4} $. Clearly, $ f_{4}(A_{4}) \subseteq B_{4} $, and regarding the opposite inclusion $ B_{4} \subseteq f_{4}(A_{4}) $, for $ b_{4} \in B_{4} $ of the form $ b_{4} = \widetilde{j}(b_{2}) + \widetilde{k}(b_{3}) $, we can put $ a_{4} = \widetilde{j}(f_{2}^{-1}(b_{2})) + \widetilde{k}(f_{3}^{-1}(b_{3})) $ and obtain $ f_{4}(a_{4}) = b_{4} $. Further, let us pick $ a_{4} \in A_{4} $, so it is of the form
$$ a_{4} = \widetilde{j}(a_{2}) + \widetilde{k}(a_{3}) = (a_{2}, a_{3}) + N, $$
while
$$ f_{4}(a_{4}) = \widetilde{j}(f_{2}(a_{2})) + \widetilde{k}(f_{3}(a_{3})) = (f_{2}(a_{2}), f_{3}(a_{3})) + N. $$
Since $ g $ is a surjective linear isometry on $ E $, it follows that
\begin{align*}
\Vert f_{4}(a_{4}) \Vert & = \inf_{e \in E} \big( \Vert f_{2}(a_{2}) + j(e) \Vert + \Vert f_{3}(a_{3}) - k(e) \Vert \big) \\
 & = \inf_{e \in E} \big( \Vert f_{2}(a_{2}) + j(g(e)) \Vert + \Vert f_{3}(a_{3}) - k(g(e)) \Vert \big) \\
 & = \inf_{e \in E} \big( \Vert f_{2}(a_{2}) + f_{2}(j(e)) \Vert + \Vert f_{3}(a_{3}) - f_{3}(k(e)) \Vert \big) \\
 & = \inf_{e \in E} \big( \Vert a_{2} + j(e) \Vert + \Vert a_{3} - k(e) \Vert \big) \\
 & = \Vert a_{4} \Vert.
\end{align*}

Now, let us check that the choice $ \mathcal{O}_{4} = (C_{4}, f_{4} : A_{4} \to B_{4}) $ works. For $ c_{2} \in C_{2} $, we have
$$ \Vert \widetilde{j}(c_{2}) \Vert = \Vert (c_{2}, 0) + N \Vert = \inf_{e \in E} \big( \Vert c_{2} + j(e) \Vert + \Vert k(e) \Vert \big) = \Vert c_{2} \Vert. $$
Analogously, $ \Vert \widetilde{k}(c_{3}) \Vert = \Vert c_{3} \Vert $ for $ c_{3} \in C_{3} $, and it follows from the definition of $ f_{4} $ that both $ \widetilde{j} $ and $ \widetilde{k} $ are embeddings of $ \mathcal{O}_{2} $, resp.~$ \mathcal{O}_{3} $, into $ \mathcal{O}_{4} $. Finally, for $ e \in E $,
$$ (\widetilde{j} \circ j)(e) = (j(e), 0) + N = (0, k(e)) + N = (\widetilde{k} \circ k)(e), $$
in particular, $ \widetilde{j} \circ j \circ i = \widetilde{k} \circ k \circ i $.

(2) $ \Rightarrow $ (3): We can suppose that $ E $ is an extension of $ C $ and $ g $ is an extension of $ f $. Since the norm of $ E $ is polyhedral, there is $ n \in \mathbb{N} $ such that $ g^{n} = id_{E} $. Given $ a_{0}, \dots, a_{n-1} \in A $, we denote $ x_{i} = g^{n-i}(a_{i}) $, obtaining
\begin{align*}
\Vert a_{0} - f(a_{n-1}) \Vert & = \Vert g^{n}(a_{0}) - g(a_{n-1}) \Vert \\
 & = \Vert x_{0} - x_{n-1} \Vert = \Big\Vert \sum_{i=0}^{n-2} x_{i} - x_{i+1} \Big\Vert \\
 & \leq \sum_{i=0}^{n-2} \Vert x_{i} - x_{i+1} \Vert = \sum_{i=0}^{n-2} \big\Vert g^{n-i}(a_{i}) - g^{n-i-1}(a_{i+1}) \big\Vert \\
 & = \sum_{i=0}^{n-2} \Vert g(a_{i}) - a_{i+1} \Vert = \sum_{i=0}^{n-2} \Vert f(a_{i}) - a_{i+1} \Vert.
\end{align*}

(3) $ \Rightarrow $ (2): Let $ E_{0} $ be the space $ C^{n} $ with the norm $ \Vert (c_{0}, \dots, c_{n-1}) \Vert = \sum_{i=0}^{n-1} \Vert c_{i} \Vert $, and with the isometry
$$ g_{0} : (c_{0}, \dots, c_{n-1}) \mapsto (c_{1}, \dots, c_{n-1}, c_{0}). $$
Let $ N $ be the subspace of $ E_{0} $ generated by vectors of the form
$$ (0, \dots, 0, a, -f(a), 0, \dots, 0), $$
respectively,
$$ (-f(a), 0, \dots, 0, a), $$
where $ a \in A $.

Let $ E $ be the quotient $ E_{0}/N $. Let us denote
$$ \widetilde{c} = (c, 0, \dots, 0) + N, \quad c \in C. $$
Since $ g_{0}(N) = N $, the mapping
$$ g(x+N) = g_{0}(x)+N, \quad x+N \in E, $$
is well-defined, moreover it is an isometry. For $ a \in A $, we have
\begin{align*}
g(\widetilde{a}) & = g \big( (a, 0, \dots, 0) + N \big) = g_{0} \big( (a, 0, \dots, 0) \big) + N \\
 & = (0, \dots, 0, a) + N = (f(a), 0, \dots, 0) + N = \widetilde{f(a)}.
\end{align*}
Hence, it remains to show that $ c \mapsto \widetilde{c} $ is an isometry. For $ c \in C $, we have
$$ \Vert \widetilde{c} \Vert = \inf_{u \in N} \Vert (c, 0, \dots, 0) + u \Vert, $$
thus $ \Vert \widetilde{c} \Vert \leq \Vert c \Vert $ by the choice $ u = 0 $. On the other hand, a general $ u \in N $ is of the form
$$ u = \big( a_{0} - f(a_{n-1}), a_{1} - f(a_{0}), \dots, a_{n-1} - f(a_{n-2}) \big), $$
for which, by the inequality from (3),
\begin{align*}
\Vert (c, 0, \dots, 0) + u \Vert & = \Vert c + a_{0} - f(a_{n-1}) \Vert + \sum_{i=0}^{n-2} \Vert a_{i+1} - f(a_{i}) \Vert \\
 & \geq \Vert c + a_{0} - f(a_{n-1}) \Vert + \Vert a_{0} - f(a_{n-1}) \Vert \geq \Vert c \Vert,
\end{align*}
arriving at $ \Vert \widetilde{c} \Vert \geq \Vert c \Vert $. We obtain $ \Vert \widetilde{c} \Vert = \Vert c \Vert $ for $ c \in C $, finishing the proof of (2).
\end{proof}

To close this section, we show that the two-dimensional example from the proof of Theorem~\ref{th:weakamalg} actually disproves the $1$-Hrushovski property also for the class of general finite-dimensional normed spaces.

\begin{proposition}
Let $ X $ be a finite-dimensional normed linear space containing points $ e_{1}, e_{2} $ such that $ \Vert ae_{1} + be_{2} \Vert = |a| + |b| $ for all $ a, b \in \mathbb{F} $. Then there is no surjective linear isometry on $ X $ that maps $ e_{1} $ to $ \frac{1}{2}(e_{1} + e_{2}) $.
\end{proposition}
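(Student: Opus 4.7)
The plan is to assume for contradiction that such a $g$ exists, derive an infinite-series representation $e_{1} = \sum_{j=1}^{\infty} 2^{-j} g^{-j}(e_{2})$, and then use Hahn-Banach together with compactness of the isometry group to reach a contradiction. If $\FFb = \QQb$, I first pass to the real completion of $X$, on which $g$ extends to a surjective $\RRb$-linear isometry preserving the $\ell_{1}$-type relation on $\spn(e_{1},e_{2})$, so I may assume $\FFb = \RRb$ and $X$ complete.

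For the key identity, set $S := \sum_{j=1}^{\infty} 2^{-j} g^{-j}(e_{2})$ (absolutely convergent since each summand has norm $2^{-j}$). Applying $g$ term-by-term and re-indexing yields $g(S) = \tfrac{1}{2} e_{2} + \tfrac{1}{2} S$, i.e., $(g - \tfrac{1}{2}I)(S) = \tfrac{1}{2} e_{2}$. The hypothesis $g(e_{1}) = \tfrac{1}{2}(e_{1}+e_{2})$ gives $(g - \tfrac{1}{2}I)(e_{1}) = \tfrac{1}{2} e_{2}$ as well, so $g(S - e_{1}) = \tfrac{1}{2}(S - e_{1})$. Any eigenvalue $\lambda$ of an isometry satisfies $|\lambda|=1$ (from $\|v\| = \|gv\| = |\lambda|\|v\|$), so $\tfrac{1}{2}$ is not an eigenvalue; by finite-dimensionality $g - \tfrac{1}{2}I$ is invertible, giving $S = e_{1}$.

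Next, applying Hahn-Banach to the norm-one functional $ae_{1} + be_{2} \mapsto a$ on $\spn(e_{1},e_{2})$ produces $\phi \in X^{*}$ with $\|\phi\|=1$, $\phi(e_{1})=1$, $\phi(e_{2})=0$. Applying $\phi$ to $e_{1} = S$: since each summand satisfies $\phi(g^{-j}(e_{2})) \leq 1$ and the positive weights $2^{-j}$ sum to $1$, the equality case forces $\phi(g^{-j}(e_{2})) = 1$ for every $j \geq 1$. Finally, $\Iso(X)$ is compact (a bounded closed subset of the finite-dimensional space of linear operators), so a standard argument yields integers $n_{k} \to \infty$ with $g^{n_{k}} \to I$ in operator norm; therefore $g^{-n_{k}}(e_{2}) \to e_{2}$, and continuity of $\phi$ gives $1 = \phi(g^{-n_{k}}(e_{2})) \to \phi(e_{2}) = 0$, a contradiction. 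I expect the most delicate step to be the identity $S = e_{1}$: it hinges on the spectral observation that $\tfrac{1}{2}$ cannot be an eigenvalue of $g$, and this is precisely what replaces the finite-order argument used in the polyhedral case of Theorem~\ref{th:weakamalg}.
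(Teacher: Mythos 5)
Your proof is correct, but it takes a genuinely different route from the paper's. The paper fixes the smallest $n$ with $T^{n+1}e_2 \in \spn\{e_1,e_2,Te_2,\dots,T^ne_2\}$, shows these vectors form a basis, derives the explicit expansion $T^{k}e_1 = 2^{-k}e_1+2^{-k}e_2+2^{-(k-1)}Te_2+\dots+\tfrac12 T^{k-1}e_2$, and concludes that $T^{n+1}e_1$ is a positive convex combination of norm-one points whose convex hull lies in the unit sphere; hence the norm is smooth at $T^{n+1}e_1$, contradicting non-smoothness at $e_1$ (to which it is isometrically equivalent). You instead derive the closed-form identity $e_1=\sum_{j\ge 1}2^{-j}g^{-j}(e_2)$ --- essentially the limit of the paper's expansion after applying $T^{-k}$ --- via the clean spectral observation that $\tfrac12$ cannot be an eigenvalue of an isometry, and then replace the smoothness argument by a supporting-functional-plus-recurrence argument: the Hahn--Banach extension $\phi$ with $\phi(e_1)=1$, $\phi(e_2)=0$ is pinned to value $1$ on every $g^{-j}(e_2)$ by the equality case of the convex combination, while compactness of $\Iso(X)$ yields $n_k\ge 1$ with $g^{-n_k}(e_2)\to e_2$, forcing $1=\phi(e_2)=0$. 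All steps check out: the reduction from $\FFb=\QQb$ to $\RRb$ by completion is legitimate (the norm, the isometry, and the $\ell_1$-relation all extend by continuity, and the completion remains finite-dimensional), the equality-case argument is sound since each $\phi(g^{-j}(e_2))\le 1$ and the weights sum to $1$, and the existence of $n_k\ge1$ with $g^{n_k}\to I$ is the standard recurrence fact for compact groups. Your approach buys a shorter path that avoids both the basis construction and any discussion of smoothness of the norm, at the cost of invoking Hahn--Banach and compactness of the isometry group; the paper's argument is more geometric and self-contained, locating the contradiction directly in the facial structure of the unit ball.
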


\begin{proof}
Suppose that $ T : X \to X $ is a surjective linear isometry satisfying $ Te_{1} = \frac{1}{2}(e_{1} + e_{2}) $. Without loss of generality, we suppose that $ X $ is the linear span of the sequence $ e_{1}, e_{2}, Te_{1}, Te_{2}, T^{2}e_{1}, T^{2}e_{2}, \dots $, since $ T $ maps this span into itself. Let $ n $ be the smallest number such that $ T^{n+1}e_{2} $ belongs to the linear span of $ e_{1}, e_{2}, Te_{2}, T^{2}e_{2}, \dots, T^{n}e_{2} $. We claim that $ e_{1}, e_{2}, Te_{2}, T^{2}e_{2}, \dots, T^{n}e_{2} $ is a basis of $ X $. Since these vectors are linearly independent, we need to show only that each $ T^{m}e_{1} $ and each $ T^{m}e_{2} $ belong to their linear span.

For suitable $ \alpha, \beta_{0}, \beta_{1}, \dots, \beta_{n} $, we have
$$ T^{n+1}e_{2} = \alpha e_{1} + \beta_{0} e_{2} + \beta_{1} Te_{2} + \dots + \beta_{n} T^{n}e_{2}. $$
It is easy to show by induction that
$$ T^{k}e_{1} = \frac{1}{2^{k}} e_{1} + \frac{1}{2^{k}} e_{2} + \frac{1}{2^{k-1}} Te_{2} + \dots + \frac{1}{2} T^{k-1}e_{2}, $$
so
\begin{align*}
T^{n+k+1}e_{2} & = \alpha T^{k}e_{1} + \beta_{0} T^{k}e_{2} + \beta_{1} T^{k+1}e_{2} + \dots + \beta_{n} T^{n+k}e_{2} \\
 & = \alpha \Big[ \frac{1}{2^{k}} e_{1} + \frac{1}{2^{k}} e_{2} + \frac{1}{2^{k-1}} Te_{2} + \dots + \frac{1}{2} T^{k-1}e_{2} \Big] \\
 & \quad + \beta_{0} T^{k}e_{2} + \beta_{1} T^{k+1}e_{2} + \dots + \beta_{n} T^{n+k}e_{2}.
\end{align*}
We first find out that by induction each $ T^{m}e_{2} $ belongs to the linear span of $ e_{1}, e_{2}, Te_{2}, T^{2}e_{2}, \dots, T^{n}e_{2} $, and then it follows that the same holds for each $ T^{m}e_{1} $.

From the formula for $ T^{k}e_{1} $, we see that $ T^{n+1}e_{1} $ is a convex combination of the basic vectors $ e_{1}, e_{2}, Te_{2}, T^{2}e_{2}, \dots, T^{n}e_{2} $, in which each coefficient is positive. Since $ T^{n+1}e_{1} $ has norm $ 1 $, as well as $ e_{1}, e_{2}, Te_{2}, T^{2}e_{2}, \dots, T^{n}e_{2} $, the whole convex hull of $ e_{1}, e_{2}, Te_{2}, T^{2}e_{2}, \dots, T^{n}e_{2} $ consists of vectors of norm $ 1 $. Hence, the norm is smooth in $ T^{n+1}e_{1} $. The same is true for $ e_{1} $, as there is obviously an isometry that maps $ e_{1} $ to $ T^{n+1}e_{1} $. But this is not possible, because the norm is not smooth in $ e_{1} $ due to our assumptions.
\end{proof}

\section{Key lemma}

This section is devoted to the proof of the following lemma important for our proof that $(\PPc_\FFb)_1$ does not have WAP (where $ \mathbb{F} = \mathbb{Q} $ or $ \mathbb{F} = \mathbb{R} $).

\begin{lemma} \label{keylemma}
Let $ \mathcal{O}_{0} = (C_{0}, f_{0} : A_{0} \to B_{0}) \in (\PPc_\FFb)_1$ be such that there exist $ \mathcal{O}_{1} = (C_{1}, f_{1} : A_{1} \to B_{1}) \in (\PPc_\FFb)_1$ and an embedding $ i : \mathcal{O}_{0} \to \mathcal{O}_{1} $ such that for any $ \mathcal{O}_{2}=(C_{2}, f_{2} : A_{2} \to B_{2})$, $ \mathcal{O}_{3}=(C_{3}, f_{3} : A_{3} \to B_{3}) \in (\PPc_\FFb)_1$ and embeddings $ j : \mathcal{O}_{1} \to \mathcal{O}_{2} $, $ k : \mathcal{O}_{1} \to \mathcal{O}_{3} $, there are  $ \mathcal{O}_{4}=(C_{4}, f_{4} : A_{4} \to B_{4}) \in (\PPc_\FFb)_1$ and embeddings $ \widetilde{j} : \mathcal{O}_{2} \to \mathcal{O}_{4} $, $ \widetilde{k} : \mathcal{O}_{3} \to \mathcal{O}_{4} $ such that $ \widetilde{j} \circ j \circ i = \widetilde{k} \circ k \circ i $.

Then there exists $ \mathcal{O}' \in (\PPc_\FFb)_1 $ of the form $ \mathcal{O}' = (E, f : E \to E) $ that admits an embedding $ i' : \mathcal{O}_{0} \to \mathcal{O}' $.
\end{lemma}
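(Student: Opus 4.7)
My plan is to combine Proposition~\ref{weakamalgequivalences}'s implication (3)~$\Rightarrow$~(2), which is proved there independently of Lemma~\ref{keylemma}, with a proof of (1)~$\Rightarrow$~(3) for $\mathcal{O}_0$. The composition (1)~$\Rightarrow$~(3)~$\Rightarrow$~(2) is exactly the content of the lemma.

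To prove (1)~$\Rightarrow$~(3), I argue by contraposition. Assume (3) fails for $\mathcal{O}_0$: for every $n \in \NNb$ there exist $a_0, \ldots, a_{n-1} \in A_0$ with $\|a_0 - f_0(a_{n-1})\| > \sum_{i=0}^{n-2} \|a_{i+1} - f_0(a_i)\|$. Given any candidate witness $\mathcal{O}_1, i$ for condition~(1), I will exhibit, for a sufficiently large $n$ and a corresponding violating sequence, two extensions $\mathcal{O}_2, \mathcal{O}_3 \in (\PPc_\FFb)_1$ of $\mathcal{O}_1$ with embeddings $j, k$ that admit no amalgam $\mathcal{O}_4 \in (\PPc_\FFb)_1$ over $i(\mathcal{O}_0)$, thereby contradicting~(1).

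The constructions of $\mathcal{O}_2$ and $\mathcal{O}_3$ mimic the shift-quotient $E_0/N$ used in the proof of (3)~$\Rightarrow$~(2), but split into two halves. In $\mathcal{O}_2$ I adjoin to $C_1$ a one-sided chain of fresh vectors $x_1, \ldots, x_{n-1}$ carrying the relations $f_2(x_{k+1}) = x_k$ and $f_2(x_1) = i(a_1)$, chosen so that isometric preservation by $f_2$ automatically yields small perturbation distances like $\|x_1 - i(a_0)\|_{C_2} = \|a_1 - f_0(a_0)\|$; symmetrically, in $\mathcal{O}_3$ I adjoin a chain $y_1, \ldots, y_{n-1}$ of formal iterated $f$-images starting at $i(a_{n-1})$. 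In any hypothetical amalgam $\mathcal{O}_4$, the commutation $\widetilde j \circ j \circ i = \widetilde k \circ k \circ i$ places the embedded sequence $i(a_0), \ldots, i(a_{n-1})$ consistently inside $\mathcal{O}_4$ and links the two chains; an iterate of the isometry $f_4$ identifies certain pairwise distances, and telescoping the triangle inequality along the concatenated path $\widetilde j(x_{n-1}) \to \cdots \to \widetilde j(x_1) \to i(a_1) \to i(a_2) \to \cdots \to i(a_{n-1}) \to \widetilde k(y_1) \to \cdots \to \widetilde k(y_{n-1})$ forces the inequality $\|a_0 - f_0(a_{n-1})\| \leq \sum_{i=0}^{n-2} \|a_{i+1} - f_0(a_i)\|$, directly contradicting the assumed violation.

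The main technical obstacle I expect is the precise polyhedral, $\FFb$-valued construction of the norms on $\mathcal{O}_2$ and $\mathcal{O}_3$, ensuring they genuinely extend $\mathcal{O}_1$ inside $(\PPc_\FFb)_1$ and realize the prescribed isometric chain relations. These norms will be constructed as suitable quotients of $\ell_1$-direct sums of copies of $C_1$, mirroring the $E_0/N$ construction used for (3)~$\Rightarrow$~(2) but based on truncated one-sided shifts rather than the full cyclic shift; polyhedrality and $\FFb$-valuedness are preserved by the standard closure properties of $\PPc_\FFb$ (passage to $\ell_1$-sums, subspaces, and quotients). Once these extensions are fixed, the telescoping contradiction extracted from any hypothetical amalgam is a direct triangle-inequality calculation, essentially the same as the one carried out in the proof of Theorem~\ref{th:weakamalg}.
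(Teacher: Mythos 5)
Your reduction of the lemma to the implication (1)~$\Rightarrow$~(3) of Proposition~\ref{weakamalgequivalences} is logically sound and non-circular (the paper's proof of (3)~$\Rightarrow$~(2) does not use Lemma~\ref{keylemma}), but your proposed proof of (1)~$\Rightarrow$~(3) has a genuine gap at its central step. In any hypothetical amalgam $\mathcal{O}_4$, the map $f_4$ is only a \emph{partial} isometry, and it moves your two chains in opposite directions: it sends $\widetilde{j}(x_{k+1})\mapsto\widetilde{j}(x_k)$ and $\widetilde{k}(y_m)\mapsto\widetilde{k}(y_{m+1})$. Consequently the only identities you get are of the form $\Vert \widetilde{j}(x_{k+1})-\widetilde{k}(y_{m})\Vert=\Vert \widetilde{j}(x_{k})-\widetilde{k}(y_{m+1})\Vert$, i.e.\ mixed distances are constant along $k+m=\mathrm{const}$; they are never reduced to a distance between two elements of the common copy of $i(C_0)$. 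So nothing forces the lower bound $\Vert \widetilde{j}(x_{n-1})-\widetilde{k}(y_{n-1})\Vert\geq\Vert a_0-f_0(a_{n-1})\Vert$ that your telescoping argument needs on the left-hand side; the triangle inequality only gives an upper bound on an uncontrolled quantity. (Compare with the proof of (2)~$\Rightarrow$~(3), where the loop closes only because polyhedrality gives $g^n=\mathrm{id}$ for a \emph{surjective} isometry $g$; there is no analogue of this periodicity for $f_4$.) In addition, the consecutive distances along the middle segment $i(a_1)\to\cdots\to i(a_{n-1})$ of your path are $\Vert a_m-a_{m+1}\Vert$, not $\Vert a_{m+1}-f_0(a_m)\Vert$, so even the upper bound does not match the right-hand side of (3); and it is not specified how $n$ is to be chosen in terms of the arbitrary candidate $\mathcal{O}_1$.

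The paper's proof is structurally quite different and suggests why a finite two-chain construction is unlikely to suffice. There, $\mathcal{O}_2$ and $\mathcal{O}_3$ are not two different chain extensions but the \emph{same} finite-dimensional piece $\mathrm{span}\bigcup_{k=0}^{\nu}f_{\mathfrak{C}}^{k}(j_{\mathfrak{C}}(C_1))$ of the two-sided shift quotient $\mathfrak{C}=\mathfrak{D}/\mathfrak{N}$, equipped with two \emph{different} norms, the second being a renorming built from dual functionals $\psi_{n,k}$ propagated forward and backward through $f_1$. The amalgamation hypothesis is used only to conclude that the two norms agree on the orbit of $j_{\mathfrak{C}}(c)$ for $c\in i(C_0)$ (Claim~\ref{cl3}); a cluster-point argument for the transition matrices (Claim~\ref{cl4}) and two approximation estimates (Claims~\ref{cl5} and~\ref{cl6}) then show that $i(C_0)$ lies in the subspace $E$ on which $f_1$ eventually restricts to a surjective isometry, which yields (2) directly. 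If you wish to pursue your route, you would need to supply an argument of comparable strength for (1)~$\Rightarrow$~(3); as written, the contradiction you aim to extract from the amalgam does not materialize.
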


\begin{center}
\begin{tikzpicture}
  \node (0) at (-3,0) {$ C_{0} $};
  \node (1) at (-1,0) {$ C_{1} $};
  \node (2) at (0.5,1.5) {$ C_{2} $};
  \node (3) at (0.5,-1.5) {$ C_{3} $};
  \node (4) at (2,0) {$ C_{4} $};
  \draw [->] (0) -- (1);
  \draw [->] (1) -- (2);
  \draw [->] (1) -- (3);
  \draw [->] (2) -- (4);
  \draw [->] (3) -- (4);
  \node (i) at (-2,0.3) {$ i $};
  \node (j) at (-0.5,0.95) {$ j $};
  \node (k) at (-0.5,-0.95) {$ k $};
  \node (jj) at (1.5,1) {$ \widetilde{j} $};
  \node (kk) at (1.5,-0.95) {$ \widetilde{k} $};
\end{tikzpicture}
\end{center}

We start with introducing the following notation:
\begin{itemize}
\item $ \mathfrak{D} $ denotes the space of all finitely supported $ \alpha : \mathbb{Z} \to C_{1} $ with the norm $ \Vert \alpha \Vert_{\mathfrak{D}} = \sum_{k \in \mathbb{Z}} \Vert \alpha(k) \Vert_{C_{1}} $,
\item $ f_{\mathfrak{D}} $ denotes the shift given by $ [f_{\mathfrak{D}}(\alpha)](k) = \alpha(k-1) $,
\item $ \mathfrak{N} \subseteq \mathfrak{D} $ is defined as the subspace generated by all vectors of the form $ (\dots, 0, 0, -f_{1}(a), a, 0, 0, \dots) $, where $ a \in A_{1} $ and it appears at any coordinate,
\item $ \mathfrak{C} = \mathfrak{D}/\mathfrak{N} $,
\item $ f_{\mathfrak{C}} : \mathfrak{C} \to \mathfrak{C} $ is given by $ f_{\mathfrak{C}}(\alpha + \mathfrak{N}) = f_{\mathfrak{D}}(\alpha) + \mathfrak{N} $; this is well defined since $ f_{\mathfrak{D}} $ maps $ \mathfrak{N} $ into $ \mathfrak{N} $, and, moreover, $ f_{\mathfrak{C}} $ is an isometry,
\item $ j_{\mathfrak{C}} : C_{1} \to \mathfrak{C} $ is given by $ j_{\mathfrak{C}}(c) = (\dots, 0, c, 0, \dots) + \mathfrak{N} $, where $ c $ appears on the $ 0 $th coordinate; it follows from the next claim applied to $ k = l = 0 $ that $ j_{\mathfrak{C}} : C_{1} \to \mathfrak{C} $ is an isometry.
\end{itemize}

\begin{claim} \label{cl1}
Let $ k \leq l $ be integers and let $ \mathfrak{D}_{[k, l]} $ and $ \mathfrak{N}_{[k, l]} $ be the subspaces of $ \mathfrak{D} $ and $ \mathfrak{N} $ of all points supported by $ \{ k, k+1, \dots, l \} $. Then
$$ \Vert \alpha + \mathfrak{N} \Vert_{\mathfrak{C}} = \inf \{ \Vert \alpha + \eta \Vert_{\mathfrak{D}} : \eta \in \mathfrak{N}_{[k, l]} \}, \quad \alpha \in \mathfrak{D}_{[k, l]}. $$
Consequently, the subspace $ \{ \alpha + \mathfrak{N} : \alpha \in \mathfrak{D}_{[k, l]} \} $ of $ \mathfrak{C} $ is isometric to $ \mathfrak{D}_{[k, l]}/\mathfrak{N}_{[k, l]} $, and thus it is polyhedral.
\end{claim}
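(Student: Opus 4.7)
The inequality $\Vert \alpha + \mathfrak{N} \Vert_{\mathfrak{C}} \leq \inf\{ \Vert \alpha + \eta \Vert_{\mathfrak{D}} : \eta \in \mathfrak{N}_{[k, l]} \}$ is immediate from $\mathfrak{N}_{[k, l]} \subseteq \mathfrak{N}$. My plan for the reverse inequality is an iterative reduction: starting from any $\eta \in \mathfrak{N}$, I produce $\eta' \in \mathfrak{N}_{[k, l]}$ with $\Vert \alpha + \eta' \Vert_{\mathfrak{D}} \leq \Vert \alpha + \eta \Vert_{\mathfrak{D}}$ by trimming the support of $v := \alpha + \eta$ down to $[k, l]$ using subtractions of the defining generators of $\mathfrak{N}$.

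The first step is the observation that every $\eta \in \mathfrak{N}$ admits a unique representation $\eta(n) = b_n - f_{1}(b_{n+1})$ for some finitely supported sequence $(b_n)_{n \in \mathbb{Z}}$ with $b_n \in A_1$: existence is immediate by collecting coefficients of the defining generators, and uniqueness follows because a nonzero difference $(c_n)$ of two such representations would satisfy $c_n = f_{1}(c_{n+1})$, and injectivity of $f_{1}$ together with finite support would give a contradiction at the maximal index where $c_n \neq 0$. Set $L = \max\{ n : b_n \neq 0 \}$ and $K = \min\{ n : b_n \neq 0 \}$. If $L > l$, then $b_{L+1} = 0$ and $\alpha(L) = 0$, so $v(L) = b_L \in A_1$; subtracting from $\eta$ the generator of $\mathfrak{N}$ whose only nonzero entries are $-f_{1}(b_L)$ at coordinate $L-1$ and $b_L$ at coordinate $L$ yields $\eta' \in \mathfrak{N}$ with $v'(L) = 0$ and $v'(L-1) = v(L-1) + f_{1}(b_L)$. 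The triangle inequality together with $\Vert f_{1}(b_L) \Vert = \Vert b_L \Vert = \Vert v(L) \Vert$ then gives $\Vert v' \Vert_{\mathfrak{D}} \leq \Vert v \Vert_{\mathfrak{D}}$, while the new $b$-sequence has strictly smaller maximum. A symmetric move handles the case $K \leq k$: then $v(K-1) = -f_{1}(b_K) \in B_1$, and subtracting the analogous generator at coordinate $K$ zeroes $v(K-1)$, strictly raises the minimum of the $b$-sequence, and does not increase $\Vert \cdot \Vert_{\mathfrak{D}}$.

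After finitely many such steps $(b_n)$ is supported in $[k+1, l]$, which is exactly the condition forcing the resulting $\eta'$ to lie in $\mathfrak{N}_{[k, l]}$, so the norm identity follows. The main point that needs care is the membership $v(L) \in A_1$, $v(K-1) \in B_1$ that makes the appropriate generator available at each step; with this in hand the rest is bookkeeping with the triangle inequality. For the \emph{Consequently} clause, the map $\alpha + \mathfrak{N}_{[k, l]} \mapsto \alpha + \mathfrak{N}$ is a well-defined linear bijection from $\mathfrak{D}_{[k, l]}/\mathfrak{N}_{[k, l]}$ onto $\{ \alpha + \mathfrak{N} : \alpha \in \mathfrak{D}_{[k, l]} \}$, and the identity just established says it is isometric. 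Since $\mathfrak{D}_{[k, l]}$ is an iterated $\ell_{1}$-sum of finitely many copies of the polyhedral space $C_{1}$, it is polyhedral by part~(c) of Fact~3.2, so its quotient $\mathfrak{D}_{[k, l]}/\mathfrak{N}_{[k, l]}$ is polyhedral by part~(b) of the same fact.
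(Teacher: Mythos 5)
Your proof is correct and follows essentially the same route as the paper: both arguments trim the support of a general $\eta\in\mathfrak{N}$ from the right (using $\Vert f_{1}(a)\Vert=\Vert a\Vert$ and the triangle inequality to drop the outermost generator at the top end) and from the left (symmetrically at the bottom end) until $\eta$ lies in $\mathfrak{N}_{[k,l]}$, without increasing $\Vert\alpha+\eta\Vert_{\mathfrak{D}}$. The only differences are cosmetic: you formalize the representation $\eta(n)=b_{n}-f_{1}(b_{n+1})$ and remove one generator per step, while the paper zeroes the extremal coefficient directly, and you spell out the (routine) ``Consequently'' clause that the paper leaves implicit.
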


\begin{proof}
Let us fix $ \alpha \in \mathfrak{D}_{[k, l]} $. The inequality ``$ \leq $'' is clear. To prove ``$ \geq $'', given $ \eta \in \mathfrak{N} $, we want to find $ \eta' \in \mathfrak{N}_{[k, l]} $ such that $ \Vert \alpha + \eta' \Vert_{\mathfrak{D}} \leq \Vert \alpha + \eta \Vert_{\mathfrak{D}} $. We can write
$$ \eta = (\dots, 0, - f_{1}(a_{p+1}), a_{p+1} - f_{1}(a_{p+2}), \dots, a_{q-1} - f_{1}(a_{q}), a_{q}, 0, \dots), $$
where $ a_{r} - f_{1}(a_{r+1}) $ appears at the $ r $th coordinate, once $ \eta $ is supported by $ \{ p, p+1, \dots, q \} $. Now, if $ p < k $, then replacing $ a_{p+1} $ by $ 0 $ does not increase $ \Vert \alpha + \eta \Vert_{\mathfrak{D}} $, as
\begin{align*}
\Vert - f_{1}(a_{p+1}) \Vert_{C_{1}} + & \Vert \alpha(p+1) + a_{p+1} - f_{1}(a_{p+2}) \Vert_{C_{1}} \\
 & = \Vert a_{p+1} \Vert_{C_{1}} + \Vert \alpha(p+1) + a_{p+1} - f_{1}(a_{p+2}) \Vert_{C_{1}} \\
 & \geq \Vert \alpha(p+1) - f_{1}(a_{p+2}) \Vert_{C_{1}}.
\end{align*}
Analogously, if $ q > l $, then replacing $ a_{q} $ by $ 0 $ does not increase $ \Vert \alpha + \eta \Vert_{\mathfrak{D}} $, as
\begin{align*}
\Vert \alpha(q-1) + a_{q-1} - & f_{1}(a_{q}) \Vert_{C_{1}} + \Vert a_{q} \Vert_{C_{1}} \\
 & = \Vert \alpha(q-1) + a_{q-1} - f_{1}(a_{q}) \Vert_{C_{1}} + \Vert f_{1}(a_{q}) \Vert_{C_{1}} \\
 & \geq \Vert \alpha(q-1) + a_{q-1} \Vert_{C_{1}}.
\end{align*}
Therefore, we can shrink the support of $ \eta $ until it is a subset of $ \{ k, k+1, \dots, l \} $, without increasing $ \Vert \alpha + \eta \Vert_{\mathfrak{D}} $.
\end{proof}

\begin{claim} \label{cl2}
$ (f_{\mathfrak{C}} \circ j_{\mathfrak{C}})(a) = (j_{\mathfrak{C}} \circ f_{1})(a) $ for each $ a \in A_{1} $.
\end{claim}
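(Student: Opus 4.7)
The plan is a direct unfolding of the definitions. By definition, $j_{\mathfrak{C}}(a) = \delta_0\otimes a + \mathfrak{N}$, where I write $\delta_k\otimes c$ for the sequence in $\mathfrak{D}$ that takes the value $c \in C_1$ at coordinate $k$ and is $0$ elsewhere. Applying $f_{\mathfrak{C}}$ and using the shift rule $[f_{\mathfrak{D}}(\alpha)](k)=\alpha(k-1)$, which pushes the value at coordinate $0$ to coordinate $1$, gives
$$ (f_{\mathfrak{C}}\circ j_{\mathfrak{C}})(a) = f_{\mathfrak{D}}(\delta_0\otimes a) + \mathfrak{N} = \delta_1\otimes a + \mathfrak{N}. $$
On the other hand, directly from the definition of $j_{\mathfrak{C}}$,
$$ (j_{\mathfrak{C}}\circ f_1)(a) = \delta_0\otimes f_1(a) + \mathfrak{N}. $$

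It then remains to verify that the difference of the two representatives,
$$ \delta_1\otimes a - \delta_0\otimes f_1(a) = (\dots,0,-f_1(a),a,0,\dots), $$
with $-f_1(a)$ on coordinate $0$ and $a$ on coordinate $1$, lies in $\mathfrak{N}$. Since $a\in A_1$, this is literally one of the listed generators of $\mathfrak{N}$ (the case where ``$a$ appears at the coordinate'' $1$), so the difference belongs to $\mathfrak{N}$ and the two cosets coincide.

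I do not anticipate any real obstacle: the claim is essentially the tautology that $\mathfrak{N}$ was introduced precisely in order that $j_{\mathfrak{C}}$ intertwine $f_1$ with $f_{\mathfrak{C}}$. The only thing to keep track of is the indexing convention for the shift and the requirement $a \in A_1$, which is needed both for $f_1(a)$ to be defined and for the relevant element to qualify as a generator of $\mathfrak{N}$.
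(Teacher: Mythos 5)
Your proof is correct and follows exactly the same route as the paper's: unfold the definitions of $j_{\mathfrak{C}}$ and $f_{\mathfrak{C}}$, apply the shift, and observe that the difference of representatives $(\dots,0,-f_{1}(a),a,0,\dots)$ is precisely one of the listed generators of $\mathfrak{N}$. Nothing is missing.
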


\begin{proof}
We have
\begin{align*}
(f_{\mathfrak{C}} \circ j_{\mathfrak{C}})(a) & = f_{\mathfrak{C}}((\dots, 0, a, 0, \dots) + \mathfrak{N}) \\
 & = f_{\mathfrak{D}}((\dots, 0, a, 0, \dots)) + \mathfrak{N} = (\dots, 0, 0, a, \dots) + \mathfrak{N} \\
 & = (\dots, 0, f_{1}(a), 0, \dots) + \mathfrak{N} = j_{\mathfrak{C}}(f_{1}(a)) = (j_{\mathfrak{C}} \circ f_{1})(a)
\end{align*}
for each $ a \in A_{1} $.
\end{proof}

Let us consider a sequence $ \{ u^{*}_{k} \}_{k \in \mathbb{Z}} $ in the dual unit ball $ B_{C_{1}^{*}} $ of $ C_{1} $ such that $ u^{*}_{k}(f_{1}(a)) = u^{*}_{k+1}(a) $ for all $ k \in \mathbb{Z} $ and $ a \in A_{1} $. Then the sequence has the property that $ \sum_{k \in \mathbb{Z}} u^{*}_{k}(\eta(k)) = 0 $ for each $ \eta \in \mathfrak{N} $. It follows that the functional
$$ \mathfrak{u}^{*}(\alpha + \mathfrak{N}) = \sum_{k \in \mathbb{Z}} u^{*}_{k}(\alpha(k)), \quad \alpha \in \mathfrak{D}, $$
is well-defined, moreover, it belongs to $ B_{\mathfrak{C}^{*}} $. Indeed, for $ \alpha \in \mathfrak{D} $, we have $ |\mathfrak{u}^{*}(\alpha + \mathfrak{N})| = |\sum_{k \in \mathbb{Z}} u^{*}_{k}(\alpha(k))| \leq \sum_{k \in \mathbb{Z}} \Vert u^{*}_{k} \Vert_{C_{1}^{*}} \Vert \alpha(k) \Vert_{C_{1}} \leq \sum_{k \in \mathbb{Z}} \Vert \alpha(k) \Vert_{C_{1}} = \Vert \alpha \Vert_{\mathfrak{D}} $, as well as $ |\mathfrak{u}^{*}(\alpha + \mathfrak{N})| \leq \Vert \alpha + \eta \Vert_{\mathfrak{D}} $ for $ \eta \in \mathfrak{N} $, hence $ |\mathfrak{u}^{*}(\alpha + \mathfrak{N})| \leq \inf_{\eta \in \mathfrak{N}} \Vert \alpha + \eta \Vert_{\mathfrak{D}} = \Vert \alpha + \mathfrak{N} \Vert_{\mathfrak{C}} $, which gives $ \Vert \mathfrak{u}^{*} \Vert_{\mathfrak{C}^{*}} \leq 1 $.

Let $ \psi_{1}, \psi_{2}, \dots, \psi_{l} $ be an enumeration of the vertices of $ B_{C_{1}^{*}} $. For $ 1 \leq n \leq l $, we denote $ \psi_{n, 0} = \psi_{n} $ and choose $ \psi_{n, -1}, \psi_{n, 1} \in B_{C_{1}^{*}} $ such that $ \psi_{n, -1}(f_{1}(a)) = \psi_{n, 0}(a) $ and $ \psi_{n, 0}(f_{1}(a)) = \psi_{n, 1}(a) $ for every $ a \in A_{1} $. There are numbers $ s_{n, m} \in \mathbb{F} $ and $ t_{n, m} \in \mathbb{F} $ such that $ \sum_{m=1}^{l} |s_{n, m}| \leq 1 $ and $ \sum_{m=1}^{l} |t_{n, m}| \leq 1 $ for $ 1 \leq n \leq l $ and
$$ \psi_{n, 1} = \sum_{m=1}^{l} s_{n, m} \psi_{m}, \quad \psi_{n, -1} = \sum_{m=1}^{l} t_{n, m} \psi_{m}, \quad 1 \leq n \leq l. $$
Recursively, we define for $ j \geq 1 $
$$ \psi_{n, j+1} = \sum_{m=1}^{l} s_{n, m} \psi_{m, j}, \quad \psi_{n, -j-1} = \sum_{m=1}^{l} t_{n, m} \psi_{m, -j}, \quad 1 \leq n \leq l. $$
By induction, $ \Vert \psi_{n, \pm j} \Vert_{C_{1}^{*}} \leq 1 $ and $ \psi_{n, j}(f_{1}(a)) = \psi_{n, j+1}(a) $, resp.~$ \psi_{n, -j-1}(f_{1}(a)) = \psi_{n, -j}(a) $ for every $ a \in A_{1} $. Therefore, for $ 1 \leq n \leq l $,
$$ \mathfrak{u}^{*}_{n}(\alpha + \mathfrak{N}) = \sum_{k \in \mathbb{Z}} \psi_{n, k}(\alpha(k)), \quad \alpha \in \mathfrak{D}, $$
defines a functional with $ \Vert \mathfrak{u}^{*}_{n} \Vert_{\mathfrak{C}^{*}} \leq 1 $.

\begin{claim} \label{cl3}
For $ c \in i(C_{0}) $, we have
$$ \Vert x \Vert_{\mathfrak{C}} = \sup \big\{ |\mathfrak{u}^{*}_{n}(f_{\mathfrak{C}}^{k}(x))| : 1 \leq n \leq l, k \in \mathbb{Z} \big\}, \quad x \in \mathrm{span} \{ f_{\mathfrak{C}}^{k}(j_{\mathfrak{C}}(c)) : k \in \mathbb{Z} \}. $$
\end{claim}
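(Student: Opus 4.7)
\emph{Plan.} The inequality $\Vert x \Vert_{\mathfrak{C}} \geq \sup\{\cdots\}$ would be immediate, since $f_{\mathfrak{C}}$ is a surjective linear isometry of $\mathfrak{C}$ and each $\mathfrak{u}^{*}_{n}$ has norm at most $1$ in $\mathfrak{C}^{*}$, so $|\mathfrak{u}^{*}_{n}(f_{\mathfrak{C}}^{k}(x))| \leq \Vert x \Vert_{\mathfrak{C}}$ for all $n, k$.

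For the reverse inequality I would first localise. Writing $x = \sum_{k=p}^{q} \lambda_{k} f_{\mathfrak{C}}^{k}(j_{\mathfrak{C}}(c)) = \alpha + \mathfrak{N}$ with $\alpha \in \mathfrak{D}_{[p,q]}$ and $\alpha(k) = \lambda_{k} c$, Claim~\ref{cl1} identifies $\Vert x \Vert_{\mathfrak{C}}$ with the quotient norm in the finite-dimensional polyhedral space $\mathfrak{D}_{[p,q]}/\mathfrak{N}_{[p,q]}$. Combining Fact~\ref{fact1} with polyhedrality of the dual, this norm is attained at some extreme point $\phi$ of the dual unit ball, and $\phi$ corresponds to a compatible tuple $(v^{*}_{p}, \ldots, v^{*}_{q}) \in B_{C_{1}^{*}}^{q-p+1}$ with $v^{*}_{k}(f_{1}(a)) = v^{*}_{k+1}(a)$ for $a \in A_{1}$ and $p \leq k < q$, giving $\Vert x \Vert_{\mathfrak{C}} = \bigl| \sum_{k=p}^{q} \lambda_{k} v^{*}_{k}(c) \bigr|$.

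The plan is then to match this extreme tuple with a shifted restriction of one of the bi-infinite sequences $(\psi_{n, j})_{j \in \mathbb{Z}}$. I would first extend $(v^{*}_{p}, \ldots, v^{*}_{q})$ to a bi-infinite compatible sequence $(v^{*}_{k})_{k \in \mathbb{Z}}$ in $B_{C_{1}^{*}}$ by iterated Hahn--Banach (using that $f_{1} \colon A_{1} \to B_{1}$ is an isometry, so the prescribed values on $A_{1}$ and $B_{1}$ carry norm at most $1$). Next, polyhedrality of $C_{1}^{*}$ lets me write $v^{*}_{0} = \sum_{n=1}^{l} \gamma_{n} \psi_{n}$ with $\sum_{n} |\gamma_{n}| \leq 1$. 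The companion bi-infinite compatible sequence $w^{*}_{k} := \sum_{n} \gamma_{n} \psi_{n, k}$ then satisfies $w^{*}_{0} = v^{*}_{0}$ and agrees with $v^{*}_{k}$ on $A_{1}$ for every $k$ by compatibility. If I can arrange $v^{*}_{k}(c) = w^{*}_{k}(c)$ for $p \leq k \leq q$, then $\sum_{k} \lambda_{k} v^{*}_{k}(c) = \sum_{n} \gamma_{n} \mathfrak{u}^{*}_{n}(x)$ becomes an absolutely convex combination of the values $\mathfrak{u}^{*}_{n}(x)$; its modulus is therefore at most $\max_{n} |\mathfrak{u}^{*}_{n}(x)| \leq \sup_{n, k_{0}} |\mathfrak{u}^{*}_{n}(f_{\mathfrak{C}}^{k_{0}}(x))|$, yielding the desired $\leq$.

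The main obstacle will be establishing $v^{*}_{k}(c) = w^{*}_{k}(c)$ for $p \leq k \leq q$: the forced agreement on $A_{1}$ does not extend to arbitrary $c \in C_{1}$, and this is exactly where the hypothesis $c \in i(C_{0})$ must enter. My guess is that $i(C_{0})$, being the image of a full partial-isometry structure $\mathcal{O}_{0}$, carries enough extra rigidity under the action of $f_{1}$ that the non-unique Hahn--Banach extensions defining $(v^{*}_{k})$ can be chosen to coincide with $(w^{*}_{k})$ on $i(C_{0})$ throughout $\mathbb{Z}$. If that direct identification is too rigid, I would instead try to exhibit a shift $k_{0}$ and an index $n$ such that $(\psi_{n, p+k_{0}}(c), \ldots, \psi_{n, q+k_{0}}(c))$ equals $(v^{*}_{p}(c), \ldots, v^{*}_{q}(c))$ up to sign, using the recursive definition of the $\psi_{n, j}$ via the transition coefficients $s_{n, m}$ and $t_{n, m}$ together with the extremality of $\phi$. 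Carrying out this matching---in whichever form---is the technical core of the proof.
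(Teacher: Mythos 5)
Your easy inequality and the localisation via Claim~\ref{cl1} are fine, and the identification of the dual ball of $\mathfrak{D}_{[p,q]}/\mathfrak{N}_{[p,q]}$ with compatible tuples $(v^{*}_{p},\dots,v^{*}_{q})$ is correct. But the step you defer as ``the technical core'' is not a technicality that rigidity of $i(C_{0})$ can supply: it is the entire content of the claim, and no argument that, like yours, never invokes hypothesis (1) of Lemma~\ref{keylemma} can close it. Indeed, Claims~\ref{cl4}--\ref{cl6} deduce from Claim~\ref{cl3} alone (with no further use of that hypothesis) that $i(C_{0})$ lies in a subspace $E$ on which $f_{1}$ restricts to a surjective isometry; if Claim~\ref{cl3} held for arbitrary $\mathcal{O}_{0}$, $\mathcal{O}_{1}$, $i$, then every partial isometry would extend to a surjective one in some extension, contradicting the example in Theorem~\ref{th:weakamalg}. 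There is also a concrete false step on the way: from $w^{*}_{0}=v^{*}_{0}$ and compatibility you get $w^{*}_{1}=v^{*}_{1}$ on $A_{1}$, but to propagate to $k=2$ you need their agreement on $f_{1}(A_{1})\subseteq B_{1}$, which you only have on $A_{1}$; the agreement propagates only along the decreasing chain of iterated preimages $E_{\nu}$, which is exactly the obstruction the rest of the paper is built around.

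The paper's mechanism is different and is where $c\in i(C_{0})$ genuinely enters. It introduces the equivalent norm $\opnorm{x}_{\mathfrak{C}}=\sup(\{\tfrac{1}{2}\Vert x\Vert_{\mathfrak{C}}\}\cup\{|\mathfrak{u}^{*}_{n}(f_{\mathfrak{C}}^{k}(x))|\})$, shows that both $\Vert\cdot\Vert_{\mathfrak{C}}$ and $\opnorm{\cdot}_{\mathfrak{C}}$ restricted to the finite orbit span $\mathrm{span}\bigcup_{k=0}^{\nu}f_{\mathfrak{C}}^{k}(j_{\mathfrak{C}}(C_{1}))$ are polyhedral (for the latter a monotonicity argument with the transition matrices $S$, $T$ reduces the supremum over $k\in\mathbb{Z}$ to finitely many $k$), and thereby produces two objects $\mathcal{O}_{2}$, $\mathcal{O}_{3}$ of $(\PPc_\FFb)_{1}$ carrying the same shift and both receiving $\mathcal{O}_{1}$ via $j_{\mathfrak{C}}$. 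Amalgamating them over $\mathcal{O}_{1}$ as in hypothesis (1) forces the two images of $f_{\mathfrak{C}}^{k}(j_{\mathfrak{C}}(c))$ to coincide for $c\in i(C_{0})$ and $0\leq k\leq\nu$, hence the two norms agree on $\mathrm{span}\{f_{\mathfrak{C}}^{k}(j_{\mathfrak{C}}(c))\}$. Any repair of your dual-functional matching would still have to be routed through this amalgamation step.
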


\begin{proof}
We define
$$ \opnorm{x}_{\mathfrak{C}} = \sup \Big( \big\{ \tfrac{1}{2} \Vert x \Vert_{\mathfrak{C}} \big\} \cup \big\{ |\mathfrak{u}^{*}_{n}(f_{\mathfrak{C}}^{k}(x))| : 1 \leq n \leq l, k \in \mathbb{Z} \big\} \Big), \quad x \in \mathfrak{C}, $$
which is an equivalent norm on $ \mathfrak{C} $ satisfying $ \tfrac{1}{2} \Vert x \Vert_{\mathfrak{C}} \leq \opnorm{x}_{\mathfrak{C}} \leq \Vert x \Vert_{\mathfrak{C}} $, and such that $ f_{\mathfrak{C}} : \mathfrak{C} \to \mathfrak{C} $ is an isometry also with respect to $ \opnorm{\cdot}_{\mathfrak{C}} $. To prove the claim, we show that the norms $ \Vert \cdot \Vert_{\mathfrak{C}} $ and $ \opnorm{\cdot}_{\mathfrak{C}} $ coincide on the linear span of the points $ f_{\mathfrak{C}}^{k}(j_{\mathfrak{C}}(c)), k \in \mathbb{Z} $.

Notice first that $ \Vert j_{\mathfrak{C}}(e) \Vert_{\mathfrak{C}} = \opnorm{j_{\mathfrak{C}}(e)}_{\mathfrak{C}} $ for every $ e \in C_{1} $. Indeed, for some $ n $ with $ 1 \leq n \leq l $, we have $ \psi_{n}(e) = \Vert e \Vert_{C_{1}} $, so $ \Vert j_{\mathfrak{C}}(e) \Vert_{\mathfrak{C}} \geq \opnorm{j_{\mathfrak{C}}(e)}_{\mathfrak{C}} \geq \mathfrak{u}^{*}_{n}(j_{\mathfrak{C}}(e)) = \mathfrak{u}^{*}_{n}((\dots, 0, e, 0, \dots) + \mathfrak{N}) = \psi_{n, 0}(e) = \psi_{n}(e) = \Vert e \Vert_{C_{1}} = \Vert j_{\mathfrak{C}}(e) \Vert_{\mathfrak{C}} $.

It is enough to prove that $ \Vert w \Vert_{\mathfrak{C}} = \opnorm{w}_{\mathfrak{C}} $ for $ w $ of the form
$$ w = \sum_{k=0}^{\nu} w_{k} f_{\mathfrak{C}}^{k}(j_{\mathfrak{C}}(c)). $$
We can assume that $ \nu \geq 1 $. Let us put
$$ C_{2} = C_{3} = \mathrm{span} \bigcup_{k=0}^{\nu} f_{\mathfrak{C}}^{k}(j_{\mathfrak{C}}(C_{1})). $$
We equip $ C_{2} $ with the restriction of the norm $ \Vert \cdot \Vert_{\mathfrak{C}} $ and $ C_{3} $ with the restriction of the norm $ \opnorm{\cdot}_{\mathfrak{C}} $. Then $ C_{2} $ is polyhedral by Claim~\ref{cl1}, and to see that $ C_{3} $ is polyhedral, we prove that
$$ \Vert x \Vert_{C_{3}} = \sup \Big( \big\{ \tfrac{1}{2} \Vert x \Vert_{\mathfrak{C}} \big\} \cup \big\{ |\mathfrak{u}^{*}_{n}(f_{\mathfrak{C}}^{k}(x))| : 1 \leq n \leq l, -\nu \leq k \leq 0 \big\} \Big), \quad x \in C_{3}. $$
Given $ x = \sum_{r=0}^{\nu} f_{\mathfrak{C}}^{r}(j_{\mathfrak{C}}(c_{r})) \in C_{3} $, it is sufficient to show that the function
$$ k \mapsto \max \{ |\mathfrak{u}^{*}_{n}(f_{\mathfrak{C}}^{k}(x))| : 1 \leq n \leq l \} $$
is non-increasing for $ k \geq 0 $ and non-decreasing for $ k \leq -\nu $. Let us note that $ \mathfrak{u}^{*}_{n}(f_{\mathfrak{C}}^{k}(x)) = \mathfrak{u}^{*}_{n}(\sum_{r=0}^{\nu} f_{\mathfrak{C}}^{k+r}(j_{\mathfrak{C}}(c_{r}))) = \sum_{r=0}^{\nu} \psi_{n, k+r}(c_{r}) $ for each $ k \in \mathbb{Z} $. In the case $ k \geq 0 $, we have
\begin{align*}
\mathfrak{u}^{*}_{n}(f_{\mathfrak{C}}^{k+1}(x)) & = \sum_{r=0}^{\nu} \psi_{n, k+1+r}(c_{r}) = \sum_{r=0}^{\nu} \sum_{m=1}^{l} s_{n, m} \psi_{m, k+r}(c_{r}) \\
 & = \sum_{m=1}^{l} s_{n, m} \sum_{r=0}^{\nu} \psi_{m, k+r}(c_{r}) = \sum_{m=1}^{l} s_{n, m} \mathfrak{u}^{*}_{m}(f_{\mathfrak{C}}^{k}(x)),
\end{align*}
and consequently
$$ |\mathfrak{u}^{*}_{n}(f_{\mathfrak{C}}^{k+1}(x))| \leq \sum_{m=1}^{l} |s_{n, m}| |\mathfrak{u}^{*}_{m}(f_{\mathfrak{C}}^{k}(x))| \leq \max \{ |\mathfrak{u}^{*}_{m}(f_{\mathfrak{C}}^{k}(x))| : 1 \leq m \leq l \}. $$
Similarly, for $ k \leq -\nu $, we have
\begin{align*}
\mathfrak{u}^{*}_{n}(f_{\mathfrak{C}}^{k-1}(x)) & = \sum_{r=0}^{\nu} \psi_{n, k-1+r}(c_{r}) = \sum_{r=0}^{\nu} \sum_{m=1}^{l} t_{n, m} \psi_{m, k+r}(c_{r}) \\
 & = \sum_{m=1}^{l} t_{n, m} \sum_{r=0}^{\nu} \psi_{m, k+r}(c_{r}) = \sum_{m=1}^{l} t_{n, m} \mathfrak{u}^{*}_{m}(f_{\mathfrak{C}}^{k}(x)),
\end{align*}
and consequently
$$ |\mathfrak{u}^{*}_{n}(f_{\mathfrak{C}}^{k-1}(x))| \leq \sum_{m=1}^{l} |t_{n, m}| |\mathfrak{u}^{*}_{m}(f_{\mathfrak{C}}^{k}(x))| \leq \max \{ |\mathfrak{u}^{*}_{m}(f_{\mathfrak{C}}^{k}(x))| : 1 \leq m \leq l \}. $$
Hence, the formula for $ \Vert \cdot \Vert_{C_{3}} $ holds, and $ C_{3} $ is polyhedral indeed.

Next, let
$$ A_{2} = A_{3} = \mathrm{span} \bigcup_{k=0}^{\nu-1} f_{\mathfrak{C}}^{k}(j_{\mathfrak{C}}(C_{1})), \quad B_{2} = B_{3} = \mathrm{span} \bigcup_{k=1}^{\nu} f_{\mathfrak{C}}^{k}(j_{\mathfrak{C}}(C_{1})), $$
where we consider the restriction of $ \Vert \cdot \Vert_{\mathfrak{C}} $ on $ A_{2}, B_{2} $ and the restriction of $ \opnorm{\cdot}_{\mathfrak{C}} $ on $ A_{3}, B_{3} $. If we define $ i_{12} = i_{13} = j_{\mathfrak{C}} $ and $ f_{2} = f_{3} = f_{\mathfrak{C}}|_{A_{2}} $, then $ i_{12} : C_{1} \to C_{2}, i_{13} : C_{1} \to C_{3}, f_{2} : A_{2} \to B_{2} $ and $ f_{3} : A_{3} \to B_{3} $ are isometries. By Claim~\ref{cl2}, $ (f_{2} \circ i_{12})(a) = (i_{12} \circ f_{1})(a) $ (equivalently $ (f_{3} \circ i_{13})(a) = (i_{13} \circ f_{1})(a) $) for each $ a \in A_{1} $, and so $ i_{12} $ and $ i_{13} $ are embeddings from $ \mathcal{O}_{1} $ into $ \mathcal{O}_{2} = (C_{2}, f_{2} : A_{2} \to B_{2}) $ and $ \mathcal{O}_{3} = (C_{3}, f_{3} : A_{3} \to B_{3}) $.

\begin{center}
\begin{tikzpicture}
  \node (0) at (-3,0) {$ C_{0} $};
  \node (1) at (-1,0) {$ C_{1} $};
  \node (2) at (0.5,1.5) {$ C_{2} $};
  \node (3) at (0.5,-1.5) {$ C_{3} $};
  \node (4) at (2,0) {$ C_{4} $};
  \draw [->] (0) -- (1);
  \draw [->] (1) -- (2);
  \draw [->] (1) -- (3);
  \draw [->] (2) -- (4);
  \draw [->] (3) -- (4);
  \node (i) at (-2,0.3) {$ i $};
  \node (j) at (-0.55,0.95) {$ i_{12} $};
  \node (k) at (-0.5,-0.95) {$ i_{13} $};
  \node (jj) at (1.5,1) {$ i_{24} $};
  \node (kk) at (1.55,-0.95) {$ i_{34} $};
\end{tikzpicture}
\end{center}

Now, there are $ \mathcal{O}_{4} = (C_{4}, f_{4} : A_{4} \to B_{4}) \in (\PPc_\FFb)_1 $ and embeddings $ i_{24} : \mathcal{O}_{2} \to \mathcal{O}_{4} $, $ i_{34} : \mathcal{O}_{3} \to \mathcal{O}_{4} $ such that $ i_{24} \circ i_{12} \circ i = i_{34} \circ i_{13} \circ i $. Note that $ (i_{24} \circ i_{12})(c) = (i_{34} \circ i_{13})(c) $, that is, $ i_{24}(j_{\mathfrak{C}}(c)) = i_{34}(j_{\mathfrak{C}}(c)) $. By induction, we can prove for $ 0 \leq k \leq \nu $ that
$$ i_{24}(f_{\mathfrak{C}}^{k}(j_{\mathfrak{C}}(c))) = i_{34}(f_{\mathfrak{C}}^{k}(j_{\mathfrak{C}}(c))), $$
since for $ 0 \leq k \leq \nu - 1 $ we can write
\begin{align*}
i_{24}(f_{\mathfrak{C}}^{k+1}(j_{\mathfrak{C}}(c))) & = (i_{24} \circ f_{2})(f_{\mathfrak{C}}^{k}(j_{\mathfrak{C}}(c))) = (f_{4} \circ i_{24})(f_{\mathfrak{C}}^{k}(j_{\mathfrak{C}}(c))) \\
 & = (f_{4} \circ i_{34})(f_{\mathfrak{C}}^{k}(j_{\mathfrak{C}}(c))) = (i_{34} \circ f_{3})(f_{\mathfrak{C}}^{k}(j_{\mathfrak{C}}(c))) \\
 & = i_{34}(f_{\mathfrak{C}}^{k+1}(j_{\mathfrak{C}}(c))).
\end{align*}
Finally, it follows that $ i_{24}(w) = i_{34}(w) $ and $ \Vert w \Vert_{C_{2}} = \Vert i_{24}(w) \Vert_{C_{4}} = \Vert i_{34}(w) \Vert_{C_{4}} = \Vert w \Vert_{C_{3}} $, that is, $ \Vert w \Vert_{\mathfrak{C}} = \opnorm{w}_{\mathfrak{C}} $.
\end{proof}

Before the next claim, let us make one more remark on the above introduced functionals $ \psi_{n, k} $ and numbers $ s_{n, m} $ and $ t_{n, m} $. Let $ S $ be the matrix $ (s_{n, m})_{n,m=1}^{l} $ and $ T $ be the matrix $ (t_{n, m})_{n,m=1}^{l} $, and let $ s^{(j)}_{n, m} $ and $ t^{(j)}_{n, m} $ be the entries of the matrices $ S^{j} $ and $ T^{j} $. Then it is straightforward to show by induction on $ j \geq 0 $ that
$$ \psi_{n, j} = \sum_{m=1}^{l} s^{(j)}_{n, m} \psi_{m}, \quad \psi_{n, -j} = \sum_{m=1}^{l} t^{(j)}_{n, m} \psi_{m}, \quad 1 \leq n \leq l, $$
and
$$ \sum_{m=1}^{l} |s^{(j)}_{n, m}| \leq 1, \quad \sum_{m=1}^{l} |t^{(j)}_{n, m}| \leq 1, \quad 1 \leq n \leq l. $$

\begin{claim} \label{cl4}
For $ c \in i(C_{0}) $, $ \nu \in \mathbb{N} $ and $ \varepsilon > 0 $, there is $ \eta \in \mathfrak{N} $ such that $ \Vert c + \eta(0) \Vert_{C_{1}} + \sum_{0 < |k| \leq \nu} \Vert \eta(k) \Vert_{C_{1}} < \varepsilon $.
\end{claim}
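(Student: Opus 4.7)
\emph{Proof plan.} Since $C_1$ is finite-dimensional, the ambient space $\mathfrak{D}_{[-\nu,\nu]} \cong C_1^{2\nu+1}$ is finite-dimensional, and the subspace
$$
V_\nu := \bigl\{ \bigl(a^{(r)} - f_1(a^{(r+1)})\bigr)_{r=-\nu}^{\nu} : a^{(-\nu)}, \ldots, a^{(\nu+1)} \in A_1 \bigr\} \subseteq \mathfrak{D}_{[-\nu,\nu]}
$$
is automatically closed. Parametrizing a general $\eta = \sum_k g_k(a^{(k)}) \in \mathfrak{N}$ by its coefficients $(a^{(k)})$ (which we may take supported in $[-\nu,\nu+1]$ without affecting $\eta|_{[-\nu,\nu]}$), the assertion of the claim valid for every $\varepsilon > 0$ becomes equivalent to the \emph{exact} inclusion of $-c\delta_0$ (the tuple with value $c$ at position $0$ and zero elsewhere) in $V_\nu$.

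By finite-dimensional Hahn--Banach duality, $-c\delta_0 \in V_\nu$ if and only if every linear functional in the annihilator $V_\nu^{\perp}$ sends it to $0$. Writing each such functional as $\Lambda(\alpha) = \sum_r v_r^*(\alpha(r))$ for a tuple $(v_r^*)_{r=-\nu}^{\nu} \in (C_1^*)^{2\nu+1}$, and pairing $\Lambda$ against the generators of $V_\nu$ produced by the $g_k(a)$ for $-\nu \le k \le \nu+1$ and $a \in A_1$, one finds that $V_\nu^{\perp}$ consists \emph{exactly} of tuples satisfying
(a) $v_{-\nu}^*|_{A_1} = 0$;
(b) $v_r^*(a) = v_{r-1}^*(f_1(a))$ for $a \in A_1$ and $-\nu+1 \le r \le \nu$; and
(c) $v_\nu^*|_{B_1} = 0$.
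It therefore suffices to prove that every such tuple satisfies $v_0^*(c) = 0$ whenever $c \in i(C_0)$.

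To this end, I would invoke the weak amalgamation hypothesis as follows. Build $\mathcal{O}_2, \mathcal{O}_3 \in (\PPc_\FFb)_1$ together with embeddings $j : \mathcal{O}_1 \to \mathcal{O}_2$ and $k : \mathcal{O}_1 \to \mathcal{O}_3$ that introduce, respectively, a forward orbit $e_1, \ldots, e_{\nu+1} \in A_2$ with $f_2(e_1) = j(c)$ and $f_2(e_{s+1}) = e_s$, and a backward orbit $e_{-1}, \ldots, e_{-\nu-1} \in A_3$ with $f_3(k(c)) = e_{-1}$ and $f_3(e_{-s}) = e_{-s-1}$. These extensions are produced by iterating the ``free preimage'' quotient construction already appearing in the proof of (3) $\Rightarrow$ (2) of Proposition~\ref{weakamalgequivalences}, and remain within $(\PPc_\FFb)_1$ by Fact~\ref{fact1}. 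Property (1) of Lemma~\ref{keylemma} applied to $\mathcal{O}_2$ and $\mathcal{O}_3$ yields $\mathcal{O}_4 = (C_4, f_4 : A_4 \to B_4) \in (\PPc_\FFb)_1$ and embeddings $\widetilde j, \widetilde k$ with $\widetilde j \circ j \circ i = \widetilde k \circ k \circ i$. The common point $c' := \widetilde j(j(c)) = \widetilde k(k(c))$ now enjoys a two-sided $f_4$-orbit $\widetilde e_{-\nu-1}, \ldots, \widetilde e_0 = c', \ldots, \widetilde e_{\nu+1}$ of length $2\nu+3$ through $c'$.

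The existence of this orbit automatically places $-c'\delta_0$ in the analog $V_\nu^{(4)}$ of $V_\nu$ built inside $\mathcal{O}_4$: the choice $a^{(r)} = \widetilde e_r$ for $r \in [-\nu,\nu+1]\setminus\{0\}$ and $a^{(0)} = 0$ makes the required linear combination vanish in the window $[-\nu,\nu]$. Dually, any tuple in $(C_4^*)^{2\nu+1}$ satisfying the analogs of (a), (b), (c) in $\mathcal{O}_4$ must vanish on $c'$. The final step---and main technical difficulty---will be to extend the fixed $(v_r^*)$ along the inclusion $j_4 : C_1 \hookrightarrow C_4$ to a tuple $(\tilde v_r^*)$ satisfying these analogs in $\mathcal{O}_4$: the Hahn--Banach extensions of $v_{-\nu}^*$ and $v_\nu^*$ must now vanish on all of $A_4$ and $B_4$, respectively, while the intermediate $\tilde v_r^*$ must satisfy the chain (b) on all of $A_4$. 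Consistency of these prescriptions across the two partial orbits rests on the matching condition $\widetilde j \circ j \circ i = \widetilde k \circ k \circ i$, which identifies the two Hahn--Banach extensions at the basepoint $c'$; once this consistency is secured, the dual statement inside $\mathcal{O}_4$ forces $\tilde v_0^*(c') = 0$, which restricts back to $v_0^*(c) = 0$ via $j_4$.
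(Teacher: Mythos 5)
Your reformulation is correct and even clarifying: since $V_\nu$ is the image of the linear map $(a^{(k)})_{k=-\nu}^{\nu+1}\mapsto\bigl(a^{(r)}-f_1(a^{(r+1)})\bigr)_{r=-\nu}^{\nu}$ defined on the finite-dimensional space $A_1^{2\nu+2}$, it is closed, so the claim quantified over all $\varepsilon>0$ is indeed equivalent to the exact membership $-c\delta_0\in V_\nu$, and your description of $V_\nu^{\perp}$ by (a)--(c) is right. The adjunction of iterated preimages and images of $c$ and the amalgamation into $\mathcal{O}_4$ are plausible (modulo checking that the preimage pushouts stay in $(\PPc_\FFb)_1$). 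A small slip: with $a^{(0)}=0$ and $a^{(-1)}=\widetilde{e}_{-1}$ you get $\eta(-1)=\widetilde{e}_{-1}-f_4(a^{(0)})=\widetilde{e}_{-1}\neq 0$; the correct witness uses only the forward orbit, namely $a^{(r)}=\widetilde{e}_r$ for $r\geq 1$ and $a^{(r)}=0$ for $r\leq 0$.

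The genuine gap is the transfer back from $C_4$ to $C_1$, which you defer as the ``main technical difficulty'' but which is in fact the entire content of the claim. What the orbit gives you is $-c'\delta_0\in V_\nu^{(4)}$, the analogous subspace built from $A_4$ and $f_4$; what you need is membership in $V_\nu$, built from $A_1$ and $f_1$ inside $C_1$. Dually, you need the restriction map $(V_\nu^{(4)})^{\perp}\to V_\nu^{\perp}$ to be surjective, i.e., every tuple satisfying (a)--(c) on $C_1$ to extend to one satisfying the analogues on $C_4$, and there is no reason for such an extension to exist: condition (a) for the extended tuple forces $\tilde v_{-\nu}^{*}$ to vanish on all of $A_4$, and $A_4$ contains $c'=\widetilde{k}(k(c))$ itself (since $k(c)\in A_3$), so the extension already presupposes $v_{-\nu}^{*}(c)=0$ --- a statement of exactly the same kind as the target $v_0^{*}(c)=0$ and not implied by (a)--(c) on $C_1$ (note $c$ need not lie in $A_1$; that $c\in E\subseteq A_1$ is Claim~\ref{cl6}, which the paper \emph{deduces from} Claim~\ref{cl4}). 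On the primal side the same obstruction reads: having iterated preimages of $c'$ in the larger space $C_4$ says nothing about having approximate ones inside $C_1$. The paper circumvents this by a different use of the hypothesis: the amalgamation is applied in Claim~\ref{cl3} to the orbit space carrying two norms, yielding that $\Vert\cdot\Vert_{\mathfrak{C}}$ on $\mathrm{span}\{f_{\mathfrak{C}}^{k}(j_{\mathfrak{C}}(c))\}$ is computed by the functionals $\mathfrak{u}^{*}_{n}$, and Claim~\ref{cl4} then follows from a quantitative averaging argument (an alternating sum of $2R$ shifted copies of $c$ has $\mathfrak{C}$-norm less than $2R\varepsilon$, so by pigeonhole some copy is $\varepsilon$-cancelled by $\mathfrak{N}$ in a window); the exact membership in $V_\nu$ is recovered only a posteriori from closedness. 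Your single amalgamation does not encode enough information to replace that step, so as it stands the proposal does not prove the claim.
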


\begin{proof}
We can assume that $ \Vert c \Vert_{C_{1}} = 1 $. Since there is a cluster point of the sequence $ \{ (S^{j}, T^{j}) \}_{j=0}^{\infty} $, we can choose $ \tau > \sigma \geq 0 $ such that $ \tau \geq 3\sigma $, $ \tau \geq \sigma + 2\nu + 1 $ and
$$ |s^{(\tau)}_{n, m} - s^{(\sigma)}_{n, m}| \leq \varepsilon/l, \quad |t^{(\tau)}_{n, m} - t^{(\sigma)}_{n, m}| \leq \varepsilon/l, \quad 1 \leq n, m \leq l. $$
Then, for $ j \geq 0 $, using $ S^{\tau + j} - S^{\sigma + j} = S^{j} (S^{\tau} - S^{\sigma}) $, we obtain $ |s^{(\tau + j)}_{n, m} - s^{(\sigma + j)}_{n, m}| = |\sum_{o=1}^{l} s^{(j)}_{n, o} (s^{(\tau)}_{o, m} - s^{(\sigma)}_{o, m})| \leq (\varepsilon/l) \sum_{o=1}^{l} |s^{(j)}_{n, o}| $. From this and from the same for $ T $, we get
$$ |s^{(\tau + j)}_{n, m} - s^{(\sigma + j)}_{n, m}| \leq \varepsilon/l, \quad |t^{(\tau + j)}_{n, m} - t^{(\sigma + j)}_{n, m}| \leq \varepsilon/l, \quad 1 \leq n, m \leq l. $$

Let us choose $ R \in \mathbb{N} $ with $ R \geq 2/\varepsilon $, and let
$$ x = \sum_{p=0}^{2R-1} (-1)^{p} f_{\mathfrak{C}}^{p(\tau - \sigma)}(j_{\mathfrak{C}}(c)). $$
That is, $ x = \sum_{r=0}^{R-1} [f_{\mathfrak{C}}^{2r(\tau - \sigma)}(j_{\mathfrak{C}}(c)) - f_{\mathfrak{C}}^{(2r+1)(\tau - \sigma)}(j_{\mathfrak{C}}(c))] $, so we have for $ k \in \mathbb{Z} $ and $ 1 \leq n \leq l $ that
$$ \mathfrak{u}^{*}_{n}(f_{\mathfrak{C}}^{k}(x)) = \sum_{r=0}^{R-1} \big[ \psi_{n, 2r(\tau - \sigma) + k}(c) - \psi_{n, (2r+1)(\tau - \sigma) + k}(c) \big]. $$
Let us consider three cases for $ r $:

$ \bullet $ If $ 2r(\tau - \sigma) + k \geq \sigma $, then
\begin{align*}
\psi_{n, 2r(\tau - \sigma) + k}(c) - & \psi_{n, (2r+1)(\tau - \sigma) + k}(c) \\
 & = \sum_{m=1}^{l} s^{(2r(\tau - \sigma) + k)}_{n, m} \psi_{m}(c) - \sum_{m=1}^{l} s^{((2r+1)(\tau - \sigma) + k)}_{n, m} \psi_{m}(c) \\
 & = \sum_{m=1}^{l} (s^{(2r(\tau - \sigma) + k)}_{n, m} - s^{((2r+1)(\tau - \sigma) + k)}_{n, m}) \psi_{m}(c),
\end{align*}
and so
\begin{align*}
\big| \psi_{n, 2r(\tau - \sigma) + k}(c) - \psi_{n, (2r+1)(\tau - \sigma) + k}(c) \big| & \leq \sum_{m=1}^{l} (\varepsilon/l) |\psi_{m}(c)| \\
 & \leq l (\varepsilon/l) \Vert c \Vert_{C_{1}} = \varepsilon.
\end{align*}

$ \bullet $ If $ (2r+1)(\tau - \sigma) + k \leq -\sigma $, then
\begin{align*}
\psi_{n, 2r(\tau - \sigma) + k}(c) - & \psi_{n, (2r+1)(\tau - \sigma) + k}(c) \\
 & = \sum_{m=1}^{l} t^{(-[2r(\tau - \sigma) + k])}_{n, m} \psi_{m}(c) - \sum_{m=1}^{l} t^{(-[(2r+1)(\tau - \sigma) + k])}_{n, m} \psi_{m}(c) \\
 & = \sum_{m=1}^{l} (t^{(-[2r(\tau - \sigma) + k])}_{n, m} - t^{(-[(2r+1)(\tau - \sigma) + k])}_{n, m}) \psi_{m}(c),
\end{align*}
and so
\begin{align*}
\big| \psi_{n, 2r(\tau - \sigma) + k}(c) - \psi_{n, (2r+1)(\tau - \sigma) + k}(c) \big| & \leq \sum_{m=1}^{l} (\varepsilon/l) |\psi_{m}(c)| \\
 & \leq l (\varepsilon/l) \Vert c \Vert_{C_{1}} = \varepsilon.
\end{align*}

$ \bullet $ In the remaining case $ 2r(\tau - \sigma) + k < \sigma $ and $ (2r+1)(\tau - \sigma) + k > -\sigma $, equivalently $ \frac{-\sigma-k}{2(\tau-\sigma)} - \frac{1}{2} < r < \frac{\sigma-k}{2(\tau-\sigma)} $, we use simply
$$ \big| \psi_{n, 2r(\tau - \sigma) + k}(c) - \psi_{n, (2r+1)(\tau - \sigma) + k}(c) \big| \leq 2 \Vert c \Vert_{C_{1}} = 2. $$
This may happen for at most one $ r $, since $ \frac{\sigma-k}{2(\tau-\sigma)} \leq 1 + \frac{-\sigma-k}{2(\tau-\sigma)} - \frac{1}{2} $, as $ \tau \geq 3\sigma $.

Altogether,
\begin{align*}
|\mathfrak{u}^{*}_{n}(f_{\mathfrak{C}}^{k}(x))| & \leq \sum_{r=0}^{R-1} \big| \psi_{n, 2r(\tau - \sigma) + k}(c) - \psi_{n, (2r+1)(\tau - \sigma) + k}(c) \big| \\
 & \leq 2 + (R-1) \varepsilon,
\end{align*}
and using this for all $ 1 \leq n \leq l $ and $ k \in \mathbb{Z} $, we obtain from Claim~\ref{cl3} that
$$ \Vert x \Vert_{\mathfrak{C}} \leq 2 + (R-1) \varepsilon \leq R \varepsilon + (R-1) \varepsilon < 2R \varepsilon. $$
We have $ x = \alpha + \mathfrak{N} $, where $ \alpha \in \mathfrak{D} $ has $ (-1)^{p} c $ on the coordinate $ p(\tau - \sigma) $ for $ 0 \leq p \leq 2R - 1 $, and $ 0 $ elsewhere. For some $ \eta \in \mathfrak{N} $, we have $ \Vert \alpha + \eta \Vert_{\mathfrak{D}} < 2R \varepsilon $. Since $ \tau \geq \sigma + 2\nu + 1 $, we can write
$$ \sum_{p=0}^{2R-1} \Big[ \big\Vert (-1)^{p} c + \eta(p(\tau - \sigma)) \big\Vert_{C_{1}} + \sum_{0 < |k| \leq \nu} \big\Vert \eta(p(\tau - \sigma) + k) \big\Vert_{C_{1}} \Big] $$
$$ \leq \sum_{k \in \mathbb{Z}} \Vert\alpha(k) + \eta(k) \Vert_{C_{1}} = \Vert \alpha + \eta \Vert_{\mathfrak{D}} < 2R \varepsilon, $$
so we must have $ \Vert (-1)^{p} c + \eta(p(\tau - \sigma)) \Vert_{C_{1}} + \sum_{0 < |k| \leq \nu} \Vert \eta(p(\tau - \sigma) + k) \Vert_{C_{1}} < \varepsilon $ for some $ 0 \leq p \leq 2R - 1 $, from which we easily deduce the claim.
\end{proof}

Let us define recursively
$$ E_{0} = F_{0} = C_{1}, \quad E_{\nu+1} = f_{1}^{-1}(B_{1} \cap E_{\nu}), \quad F_{\nu+1} = f_{1}(A_{1} \cap F_{\nu}), $$
and let
$$ E = \bigcap_{\nu=0}^{\infty} E_{\nu}. $$
Then $ E_{\nu} $ is the subspace of $ C_{1} $ of all points on which $ f_{1} $ can be applied $ \nu $ times and $ F_{\nu} $ is the subspace of $ C_{1} $ of all points on which $ f_{1}^{-1} $ can be applied $ \nu $ times, so $ F_{\nu} = f_{1}^{\nu}(E_{\nu}) $.

We have $ E_{0} \supseteq E_{1} \supseteq E_{2} \supseteq \dots $, hence there is $ \nu_{0} $ such that $ E_{\nu} = E $ for each $ \nu \geq \nu_{0} $. It follows that $ f_{1}|_{E} $ is a surjective linear isometry on $ E $. Also, $ F_{\nu} = f_{1}^{\nu}(E_{\nu}) = f_{1}^{\nu}(E) = E $ for each $ \nu \geq \nu_{0} $.

\begin{claim} \label{cl5}
For every $ \nu \in \mathbb{N} \cup \{ 0\} $, there is $ \Gamma_{\nu} > 0 $ such that for each $ \varepsilon > 0 $ and each sequence $ a_{0}, a_{1}, \dots, a_{\nu} $ in $ A_{1} $ with $ \Vert f_{1}(a_{n-1}) - a_{n} \Vert_{C_{1}} < \varepsilon $ for $ 1 \leq n \leq \nu $, the distance of $ a_{0} $ to $ E_{\nu+1} $ is less than $ \Gamma_{\nu} \varepsilon $.

Similarly, for every $ \nu \in \mathbb{N} \cup \{ 0\} $, there is $ \Delta_{\nu} > 0 $ such that for each $ \varepsilon > 0 $ and each sequence $ b_{0}, b_{1}, \dots, b_{\nu} $ in $ B_{1} $ with $ \Vert f_{1}^{-1}(b_{n-1}) - b_{n} \Vert_{C_{1}} < \varepsilon $ for $ 1 \leq n \leq \nu $, the distance of $ b_{0} $ to $ F_{\nu+1} $ is less than $ \Delta_{\nu} \varepsilon $.
\end{claim}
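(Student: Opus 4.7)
The plan is to treat the statement as a purely finite-dimensional coercivity inequality between two continuous seminorms on $A_{1}^{\nu+1}$ whose kernels are nested. Fix $\nu \in \mathbb{N} \cup \{0\}$ and define, on the finite-dimensional normed space $A_{1}^{\nu+1}$,
$$ \Phi_{\nu}(a_{0}, \dots, a_{\nu}) = \max_{1 \leq n \leq \nu} \Vert f_{1}(a_{n-1}) - a_{n} \Vert_{C_{1}}, \qquad \Psi_{\nu}(a_{0}, \dots, a_{\nu}) = \mathrm{dist}(a_{0}, E_{\nu+1}), $$
with $\Phi_{0} \equiv 0$ by convention. Linearity of $f_{1}$, the triangle inequality, and the fact that the distance to a subspace is a seminorm together imply that $\Phi_{\nu}$ and $\Psi_{\nu}$ are continuous, absolutely homogeneous seminorms on $A_{1}^{\nu+1}$.

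The key step, and essentially the only one requiring genuine verification, is the inclusion $\ker \Phi_{\nu} \subseteq \ker \Psi_{\nu}$. Iterating the definition $E_{\nu+1} = f_{1}^{-1}(B_{1} \cap E_{\nu})$, one checks that $a_{0} \in E_{\nu+1}$ if and only if $a_{0}, f_{1}(a_{0}), \dots, f_{1}^{\nu}(a_{0})$ all lie in $A_{1}$. If $\Phi_{\nu}(a_{0}, \dots, a_{\nu}) = 0$, then $a_{n} = f_{1}(a_{n-1}) = f_{1}^{n}(a_{0})$ for every $n$, and these all lie in $A_{1}$, hence $a_{0} \in E_{\nu+1}$ and $\Psi_{\nu}(a_{0}, \dots, a_{\nu}) = 0$.

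Given this inclusion, $\Psi_{\nu}$ descends to a continuous seminorm on the finite-dimensional quotient $A_{1}^{\nu+1}/\ker \Phi_{\nu}$, on which $\Phi_{\nu}$ is a norm. Compactness of the $\Phi_{\nu}$-unit sphere in that quotient produces a maximum value $\Gamma_{\nu}$ of $\Psi_{\nu}$ there (any positive $\Gamma_{\nu}$ works in the degenerate case when the quotient is trivial), and by homogeneity $\Psi_{\nu} \leq \Gamma_{\nu}\Phi_{\nu}$ throughout $A_{1}^{\nu+1}$. Feeding the strict hypothesis $\Phi_{\nu} < \varepsilon$ into this bound yields $\Psi_{\nu} < \Gamma_{\nu}\varepsilon$, which is exactly the stated conclusion.

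The second assertion is proved by the symmetric argument on $B_{1}^{\nu+1}$, using $f_{1}^{-1}$ and the recursion $F_{\nu+1} = f_{1}(A_{1} \cap F_{\nu})$ in place of their counterparts; the kernel-inclusion check runs verbatim. I do not anticipate any serious obstacle: the entire content lies in recognizing the setup as a seminorm comparison with nested kernels, after which the conclusion is a standard finite-dimensional equivalence-of-seminorms argument.
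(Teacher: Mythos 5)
Your argument is correct over $\mathbb{F}=\mathbb{R}$ and takes a genuinely different route from the paper. You prove the estimate in one shot as a comparison of two seminorms on $A_1^{\nu+1}$ with nested kernels; the kernel computation (namely that $\ker\Phi_\nu$ consists of the orbits $(a_0,f_1(a_0),\dots,f_1^{\nu}(a_0))$ with $a_0\in E_{\nu+1}$, while $\ker\Psi_\nu=E_{\nu+1}\times A_1^{\nu}$) is exactly right, and the homogeneity bookkeeping, including the degenerate cases $\nu=0$ and $\Psi_\nu\equiv 0$, checks out. The paper instead argues by induction on $\nu$: the inductive step rests on a single two-subspace inequality, $\mathrm{dist}(b,B_1\cap E_\nu)\leq\varrho\,\mathrm{dist}(b,E_\nu)$ for $b\in B_1$, obtained by passing to the quotient $C_1/(B_1\cap E_\nu)$, and the constant $\Gamma_\nu=\varrho(1+\Gamma_{\nu-1})$ is then assembled step by step. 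Your approach is more conceptual and avoids the induction entirely; the paper's approach isolates the one geometric fact being used (distance to an intersection versus distance to a subspace) and produces an explicit recursion for the constants. Both ultimately invoke the same finite-dimensional phenomenon.

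One point you should address: the claim is needed for $\mathbb{F}=\mathbb{Q}$ as well, and there the $\Phi_\nu$-unit sphere of the quotient is not compact, so ``maximum attained on the sphere'' does not apply verbatim. This is repairable: extend scalars to $\mathbb{R}$ (the seminorms are given by $\mathbb{Q}$-linear data, the kernels of the extensions are the real spans of the rational kernels since $E_{\nu+1}$ is defined by preimages and intersections of rational subspaces, and $\mathrm{dist}(a_0,E_{\nu+1})$ is unchanged because $E_{\nu+1}$ is dense in its real span), then run your compactness argument over $\mathbb{R}$ and restrict. To be fair, the paper's own proof has the analogous implicit step (positivity of the distance $\delta$ in the quotient), so this is a shared omission rather than a defect specific to your argument; but since compactness is the engine of your proof, the remark is worth making explicitly.
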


\begin{proof}
We prove only the first part. For $ \nu = 0 $, this is clear, as $ a_{0} \in A_{1} = E_{\nu+1} $. Concerning the induction step, let the statement hold for $ \nu - 1 $ with some $ \Gamma_{\nu-1} $. There is $ \varrho > 0 $ such that for every $ b \in B_{1} $ we have $ \mathrm{dist}(b, B_{1} \cap E_{\nu}) \leq \varrho \, \mathrm{dist}(b, E_{\nu}) $; indeed, let $ \delta $ be the distance of the unit sphere of $ B_{1}/(B_{1} \cap E_{\nu}) $ to $ E_{\nu}/(B_{1} \cap E_{\nu}) $ in $ C_{1}/(B_{1} \cap E_{\nu}) $, then we have
\begin{align*}
\delta \, \mathrm{dist}(b, B_{1} \cap E_{\nu}) & = \delta \Vert b + (B_{1} \cap E_{\nu}) \Vert \\
 &\leq \mathrm{dist}(b + (B_{1} \cap E_{\nu}), E_{\nu}/(B_{1} \cap E_{\nu})) \\
 & = \mathrm{dist}(b, E_{\nu}),
\end{align*}
so we can take $ \varrho = 1/\delta $. By the induction hypothesis, we have $ \mathrm{dist}(a_{1}, E_{\nu}) < \Gamma_{\nu-1} \varepsilon $, and so
\begin{align*}
 \mathrm{dist}(a_{0}, E_{\nu+1}) & = \mathrm{dist}(f_{1}(a_{0}), B_{1} \cap E_{\nu}) \leq \varrho \, \mathrm{dist}(f_{1}(a_{0}), E_{\nu}) \\
 & \leq \varrho \big( \Vert f_{1}(a_{0}) - a_{1} \Vert + \mathrm{dist}(a_{1}, E_{\nu}) \big) < \varrho (1 + \Gamma_{\nu-1}) \varepsilon.
\end{align*}
Hence, we can put $ \Gamma_{\nu} = \varrho (1 + \Gamma_{\nu-1}) $.
\end{proof}

\begin{claim} \label{cl6}
$ i(C_{0}) \subseteq E $.
\end{claim}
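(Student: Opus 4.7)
The plan is to reduce $ c \in E $ (for $ c \in i(C_0) $) to $ \mathrm{dist}(c, E) = 0 $, which is enough since $ E $ is a closed subspace of the finite-dimensional space $ C_1 $. The strategy combines Claim~\ref{cl4}, which produces an $ \eta \in \mathfrak{N} $ approximately cancelling $ c $ at coordinate $ 0 $ while being small on the neighbouring coordinates, with Claim~\ref{cl5}, which converts an approximate $ f_1 $-orbit in $ A_1 $ (respectively, $ B_1 $) into an actual element of $ E_{\nu+1} $ (respectively, $ F_{\nu+1} $).

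First I would fix $ \nu \geq \nu_0 $, so that $ E_{\nu+1} = F_{\nu+1} = E $, and an arbitrary $ \varepsilon > 0 $. Claim~\ref{cl4} yields $ \eta \in \mathfrak{N} $ with
$$ \Vert c + \eta(0) \Vert_{C_1} + \sum_{0 < |k| \leq \nu} \Vert \eta(k) \Vert_{C_1} < \varepsilon. $$
Writing $ \eta $ in the canonical form from Claim~\ref{cl1}, and padding its support by zero vectors (which does not change $ \eta $) if necessary, I may assume the representation uses $ a_{p+1}, \dots, a_q \in A_1 $ with $ p \leq -\nu - 1 $ and $ q \geq \nu + 1 $. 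The displayed bound then translates into $ \Vert c + a_0 - f_1(a_1) \Vert < \varepsilon $ together with $ \Vert a_r - f_1(a_{r+1}) \Vert < \varepsilon $ for each $ r $ with $ 0 < |r| \leq \nu $.

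The core of the argument is a double invocation of Claim~\ref{cl5}. Its first part, applied (via the reindexing "their $ a_n $ is our $ a_{-n} $") to the sequence $ a_0, a_{-1}, \dots, a_{-\nu} $ in $ A_1 $, produces $ e \in E_{\nu+1} = E $ with $ \Vert a_0 - e \Vert < \Gamma_\nu \varepsilon $. Its second part, applied to $ b_n = f_1(a_{n+1}) \in B_1 $, $ n = 0, \dots, \nu $ (for which $ \Vert f_1^{-1}(b_{n-1}) - b_n \Vert = \Vert a_n - f_1(a_{n+1}) \Vert < \varepsilon $), produces $ g \in F_{\nu+1} = E $ with $ \Vert f_1(a_1) - g \Vert < \Delta_\nu \varepsilon $. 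The triangle inequality then gives
$$ \Vert c - (g - e) \Vert < (1 + \Gamma_\nu + \Delta_\nu) \varepsilon, $$
with $ g - e \in E $, and since $ \varepsilon $ is arbitrary, $ \mathrm{dist}(c, E) = 0 $, so $ c \in E $.

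I expect the only delicate point to be bookkeeping: one must check that zero-padding $ \eta $ preserves the coordinate-wise bounds from Claim~\ref{cl4} (trivially, as the newly introduced coordinates of $ \eta $ vanish) and that the hypotheses of Claim~\ref{cl5} translate correctly after the reindexing (an easy sign swap, using that $ f_1 $ is an isometry). No new idea beyond the interaction between Claims~\ref{cl4} and~\ref{cl5} seems to be required.
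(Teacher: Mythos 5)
Your proposal is correct and follows essentially the same route as the paper's proof: apply Claim~\ref{cl4} to produce $\eta$, read off the approximate orbit conditions $\Vert c+a_{0}-f_{1}(a_{1})\Vert<\varepsilon$ and $\Vert a_{k}-f_{1}(a_{k+1})\Vert<\varepsilon$, invoke both parts of Claim~\ref{cl5} on the sequences $a_{0},a_{-1},\dots,a_{-\nu}$ and $f_{1}(a_{1}),\dots,f_{1}(a_{\nu+1})$, and finish with the triangle inequality. The bookkeeping points you flag (zero-padding of $\eta$ and the reindexing) are handled exactly as you expect.
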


\begin{proof}
Let $ c \in i(C_{0}) $, let $ \varepsilon > 0 $ be arbitrary, and let $ \nu $ be large enough to satisfy $ E_{\nu+1} = F_{\nu+1} = E $. By Claim~\ref{cl4}, there is $ \eta \in \mathfrak{N} $ such that $ \Vert c + \eta(0) \Vert_{C_{1}} + \sum_{0 < |k| \leq \nu} \Vert \eta(k) \Vert_{C_{1}} < \varepsilon $. We can write
$$ \eta = \sum_{k \in \mathbb{Z}} (\dots, 0, 0, -f_{1}(a_{k}), a_{k}, 0, 0, \dots), $$
where $ a_{k} \in A_{1} $ appears on the $ k $th coordinate and only finitely many $ a_{k} $'s are non-zero. We have $ \varepsilon > \Vert c + \eta(0) \Vert_{C_{1}} = \Vert c + a_{0} - f_{1}(a_{1}) \Vert_{C_{1}} $ and $ \varepsilon > \Vert \eta(k) \Vert_{C_{1}} = \Vert a_{k} - f_{1}(a_{k+1}) \Vert_{C_{1}} $ for $ 0 < |k| \leq \nu $. Applying Claim~\ref{cl5} on the sequences $ a_{0}, a_{-1}, \dots, a_{-\nu} $ and $ f_{1}(a_{1}), f_{1}(a_{2}), \dots, f_{1}(a_{\nu+1}) $, we obtain $ \mathrm{dist}(a_{0}, E_{\nu+1}) < \Gamma_{\nu} \varepsilon $ and $ \mathrm{dist}(f_{1}(a_{1}), F_{\nu+1}) < \Delta_{\nu} \varepsilon $. It follows that
\begin{align*}
\mathrm{dist}(c, E) & \leq \Vert c + a_{0} - f_{1}(a_{1}) \Vert_{C_{1}} + \mathrm{dist}(a_{0}, E) + \mathrm{dist}(f_{1}(a_{1}), E) \\
 & < (1 + \Gamma_{\nu} + \Delta_{\nu}) \varepsilon.
\end{align*}
Since $ \varepsilon > 0 $ was chosen arbitrarily, we arrive at $ c \in E $.
\end{proof}

To complete the proof of Lemma \ref{keylemma}, we put $ f = f_{1}|_{E} $ and $ i' = i $. Then $ f $ is a surjective linear isometry on $ E $, and Claim~\ref{cl6} guarantees that $ i' $ is an embedding of $ \mathcal{O}_{0} = (C_{0}, f_{0} : A_{0} \to B_{0}) $ into $ (E, f : E \to E) $.

\bigskip

{\bf Acknowledgment.} The authors are grateful to the referee for numerous suggestions that helped to improve this paper.

\par 
\medskip
\textsc{\footnotesize 
	Institute of Mathematics,
	Czech Academy of Sciences,
	\v{Z}itn\'a 25,
	115 67 Praha 1,
	Czech Republic}

\textit{E-mail address}: \texttt{kurka.ondrej@seznam.cz}

\par 
\medskip
\textsc{\footnotesize 
	Institute of Mathematics,
	Polish Academy of Sciences,
	\'Sniadeckich 8,
	00-656 Warszawa,
	Poland}

\textit{E-mail address}: \texttt{mamalicki@gmail.com}

\end{document}